%
%
%
\documentclass{amsproc}
\usepackage{graphicx}
\usepackage{amssymb}
\usepackage{epstopdf}
\DeclareGraphicsRule{.tif}{png}{.png}{`convert #1 `dirname #1`/`basename #1 .tif`.png}
\usepackage{amsmath,amsthm,amscd,amssymb}
\usepackage{latexsym}
\usepackage[colorlinks,citecolor=red,pagebackref,hypertexnames=false]{hyperref}
\usepackage{geometry}                
\geometry{letterpaper}

\numberwithin{equation}{section}

\theoremstyle{plain}
\newtheorem{theorem}{Theorem}[section]
\newtheorem{lemma}[theorem]{Lemma}
\newtheorem{corollary}[theorem]{Corollary}
\newtheorem{proposition}[theorem]{Proposition}

\theoremstyle{definition}
\newtheorem{definition}[theorem]{Definition}

\newtheorem{example}[theorem]{Example}

\newtheorem{case[theorem]}{Case}

\theoremstyle{remark}

\numberwithin{equation}{section}




\begin{document}

\title{A multi-parameter variant of the Erd\H os distance problem} 

\author{A. Iosevich, M. Janczak and J. Passant}

\date{today}

\email{iosevich@math.rochester.edu}
\email{mjanczak@u.rochester.edu}
\email{jpassant@ur.rochester.edu}

\address{Department of Mathematics, University of Rochester, Rochester, NY 14627}

\thanks{This work was partially supported by the NSA Grant H98230-15-1-0319}

\begin{abstract} 
We study the following variant of the Erd\H os distance problem. Given $E$ and $F$ a point sets in $\mathbb{R}^d$ and $p = (p_1, \ldots, p_q)$ with $p_1+ \cdots + p_q = d$ is an increasing partition of $d$ define
$$ B_p(E,F)=\{(|x_1-y_1|, \ldots, |x_q-y_q|): x \in E, y \in F \},$$  
where $x=(x_1, \ldots, x_q)$ with $x_i$ in $\mathbb{R}^{p_i}$. For $p_1 \geq 2$ it is not difficult to construct $E$ and $F$ such that $|B_{p}(E,F)|=1$. On the other hand, it is easy to see that if $\gamma_q$ is the best know exponent for the distance problem in $\mathbb{R}^{p_i}$ that $|B_p(E,E)| \geq C{|E|}^{\frac{\gamma_q}{q}}$. The question we study is whether we can improve the exponent $\frac{\gamma_q}{q}$.

We first study partitions of length two in detail and prove the optimal result (up to logarithms) that
$$ |B_{2,2}(E)| \gtrapprox |E| $$
In the generalised two dimensional case for $B_{k,l}$ we need the stronger condition that $E$ is $s$-adaptable (\cite{IRU14}) for $s<\frac{k}{2}+\frac{1}{3}$, letting $\gamma_m$ be the best known exponent for the Erd\H os-distance problem in $\mathbb{R}^m$ for $k \neq l$ we gain a further optimal result of,
$$ |B_{k,l}(E)| \gtrapprox |E|^{\gamma_l}.$$
When $k=l$ we use the explicit $\gamma_m=\frac{m}{2}-\frac{2}{m(m+2)}$ result due to Solymosi and Vu (\cite{SV08}) to gain
$$ |B_{k,k}(E)| \gtrapprox |E|^{\frac{13}{14}\gamma_k}.$$
For a general partition, let $\gamma_i = \frac{2}{p_i}-\frac{2}{p_i(p_i+2)}$ and $\eta_i = \frac{2}{2d-(p_i-1)}$. Then if $E$ is $s$-adaptable with $s>d-\frac{p_1}{2}+\frac{1}{3}$ we have
$$ B_p(E) \gtrapprox |E|^\tau \hspace{0.5cm} \text{where} \hspace{0.5cm} \tau = \gamma_q\left(\frac{\gamma_1+\eta_1}{\gamma_q+(q-1)(\gamma_1+\eta_1)}\right).$$
Where $p_i \sim \frac{d}{q}$ implies $\tau \sim \gamma_{q}\left(\frac{1}{q}+\frac{1}{dq}\right)$ and $p_q \sim d$ (with $q<<d$) implies $\tau \sim \gamma_{q}\left(\frac{1}{q}+\frac{1}{q^2}\right)$.
\end{abstract} 

\maketitle


\section{Introduction}

\vskip.125in 

Given a set $E$ in $\mathbb{R}^d$, the distance set of $E$ is
$$\Delta_d(E) = \{|x - y| : x, y \in E\} \subseteq \mathbb{R}.$$
In \cite{E45} Erd\H os posed the question: What is the minimal number of distinct distances determined by a finite point set $E$ in $\mathbb{R}^d$?
This has been thoroughly studied in both the $d=2$ case where the cascade of improvements to Erd\H os original $|E|^\frac{1}{2}$ by authors including Moser \cite{M52}, Chung \cite{C84}, Chung-Szemer\' edi-Trotter \cite{CST92}, Sz\' ekely \cite{S97}, Solymosi-T\' oth \cite{ST01}, Tardos \cite{T03} and most recently the solution of the problem in two dimensions due to Guth-Katz \cite{GK15}. In higher dimensions a simple variant of Erd\H os original argument gives $|E|^\frac{1}{d}$ in dimension $d$. An improvement in three dimensions due Clarkson-Edelsbrunner-Gubias-Sharir-Welzl \cite{CEGSW90} proved that one obtains at least $|E|^{\frac{1}{2}}$ distances, the three dimentional bound was furthered by Aronov-Pach-Sharir-Tardos \cite{APST04} who also proved a small improvement over the $|E|^{\frac{1}{d}}$ bound in dimension $d$. This was then improved significantly by Solymosi-Vu \cite{SV08} who proved one obtains at least $|E|^{\frac{2}{d} - \frac{2}{d(d+2)}}$ distances, a near optimal bound for large dimensions.

Recently Birklbauer-Iosevich \cite{BI17} and Hambrook-Iosevich-Rice \cite{HIR17} introduced a higher parameter variant of the Erd\H os distance problem in the contexts of finite fields and analytic setting respectively. In this paper, we study the following real variant of this high parameter Erd\H os-distance problem. 

Let $d=k+l$ and $x=(x_1,x_2)$, where $x_1$ is the vector of the first $k$ coordinates of $x$ and $x_2$ the vector of the final $l$ coordinates. Given $E \subset {\Bbb R}^d$, $d \ge 3$ and $2 \leq k \leq l$ we define
$$ B_{k,l}(E,F)=\{(|x_1-y_1|,|x_2-y_2|): x \in E, y \in F\},$$
the case $k=1$ is less interesting because in that case $x_1-y_1$ is a one-dimensional quantity. By taking $E \subset S^{k} \times \{(0, \ldots, 0)\}$ and $F \subset \{0, \dots, 0\} \times S^{l}$, we obtain $|B(E,F)|=1$.

Due to examples of this type, we will either have to impose stricter conditions on our $E$ and $F$ or consider the case were $E=F$. For most of this paper we discuss the latter of these options and denote $B_{k,l}(E) := B_{k,l}(E,E)$. In this case we can use distinct distance bound in projections to gain `trivial' lower bounds on such sets. Suppose that $E$ is a point set in $\mathbb{R}^d$ for $d=k+l$ we must have that either the projection onto the first $k$ coordinates or the projection onto the final $l$ coordinates has at least $n^{\frac{1}{2}}$ points in it. We let $\gamma_m$ be the best exponent for the Erd\H os-distance problem in $\mathbb{R}^m$, using the above we gain the `trivial' bound

\begin{equation}\label{ETrivialkl}
|B_{k,l}(E)| \gtrsim |E|^{\frac{\gamma_{l}}{2}}.
\end{equation}

We can also introduce this problem in the wider context by considering diving $d$ up into a partition of length more than two. Here let $P_q(d)$ be the set of increasing partitions of the positive integer $d$ consisting of $q$ elements, denote the members of $p$ in $P_q(d)$ as $(p_1, \ldots, p _q)$, thus we have $p_1+\cdots +p_q=d$ and $p_1 \leq \cdots \leq p_q$. Suppose that $E$ is a finite point set in $\mathbb{R}^d = \mathbb{R}^{p_1}\times \cdots \times \mathbb{R}^{p_q}$, for $x$ in $E$ we let $x_i$ to be the projection of $x$ into $\mathbb{R}^{p_i}$. Then we define

$$ B_{p}(E,F) = \{ (|x_1-y_1|, \ldots, |x_q-y_q|) :  x \in E, y \in F\}$$

We note that the $q=1$ case corresponds to the usual distinct distance conjectures thus we will consider $q\geq 2$, we also ignore cases where $p_1 = 1$ as then we again have that $x_1-y_1$ is a one-dimensional quantity. Similar to above by taking $E \subset S^{p_1} \times \{(0, \ldots, 0)\}$ and $F \subset \{0, \dots, 0\} \times S^{p_q}$, we obtain $|B(E,F)|=1$. This example only exploits the first and last projections, but one could modify $E$ so that it has alternating spheres and zeros for each $p_i$ and $F$ constructed with the opposite order, this would then exploit all projections. 

We again consider the case were $E=F$ and denote $B_p(E) := B_p(E,E)$. To gain the `trivial' bound we observe that one projection has at least $n^{\frac{1}{q}}$ points in it so we have

\begin{equation}\label{ETrivialGenp}
|B_p(E)| \gtrsim |E|^{\frac{\gamma_{p_q}}{q}}.
\end{equation}

We will later see an important use for $B_p(E,F)$ in the situation where we have that $E \neq F$ in general. In this case we impose the condition of $s$-adaptability, which gives the set a sufficient separability that such counter examples as above cannot be constructed.


Our goal in this paper is to beat the estimate (\ref{ETrivialGenp}). We begin with the case of partitions of length $2$ and later consider the larger parameter variants of this problem.

~\\
\vskip.125in 

\section{Statement of results} 

\vskip.125in 

We now proceed to state the main results of this paper. \\

\vskip.125in

\subsection{Partitions of Length Two}~

\vskip.125in 

First we consider the simplest case; that of a partition of length two. Let $p=(k,l)$ thus $d=k+l$ with $k\leq l$, given $E \subset {\Bbb R}^d$ we denote $B_p(E)$ by $B_{k,l}(E)$. It is easy to see that 
\begin{equation} \label{projectiontrivial} |B_{k,l}(E)| \ge \max \{|\Delta(\pi_1(E))|, |\Delta(\pi_2(E))| \}, \end{equation} 
where $\pi_1(x)=x_1$ the projection onto the first $k$ coordinates and $\pi_2(x)=x_2$ the projection onto the final $l$ coordinates. Since at least one of $\pi_1(E), \pi_2(E)$ has size $\ge {|E|}^{\frac{1}{2}}$. It is clear to see that at least one of the projections must contain at least $|E|^{\frac{1}{2}}$ of our points and using the distinct distance bound obtained by Solymosi and Vu \cite{SV08} we obtain the following bound
\begin{equation} \label{trivial} | B_{k,l}(E)| \gtrapprox |E|^{\frac{1}{l}-\frac{1}{l(l+2)}}.\end{equation} 

Our goal is to beat the estimate (\ref{trivial}). 
~\\
\vskip.125in

\subsection*{The Case $k=2$, $l=2$} ~

\vskip.125in 

Suppose that $k=2, l=2$. We can use the Guth-Katz solution of the Erd\H os-distance conjecture (\cite{GK15}) to gain the `trivial' bound

\begin{equation}
|B_{2,2}(E)| \gtrapprox |E|^{\frac{1}{2}}. \label{eq: trivial2,2}
\end{equation}

Our aim is to beat this estimate. Let $\nu(t_1,t_2)$ be the number of repetitions of the distance pair $(t_1,t_2)$ in $B(E)$. Using the known bounds for the single distance conjecture (\cite{SST84}), 
$$\nu(t_1,t_2) \lesssim {(|\pi_1(E)| \cdot |\pi_2(E)|)}^{\frac{4}{3}},$$ we deduce that

$$ |B_{2,2}(E)| \gtrsim  \frac{{|E|}^2}{{(|\pi_1(E)| \cdot |\pi_2(E)|)}^{\frac{4}{3}}}.$$
Combined with (\ref{projectiontrivial}), we would obtain
$$ |B_{2,2}(E)| \gtrsim {|E|}^{\frac{6}{11}}.$$ 
Assuming the single distance conjecture, $\nu(t_1,t_2) \lessapprox |\pi_1(E)| \cdot |\pi_2(E)|$, where here and throughout, $X \lessapprox Y$ with the controlling parameter $R$ means that for every $\epsilon>0$ there exists $C_{\epsilon}>0$ such that $X \leq C_{\epsilon}R^{\epsilon}Y$. It would follow that 
$$ |B_{2,2}(E)| \gtrapprox \frac{{|E|}^2}{|\pi_1(E)| \cdot |\pi_2(E)|}.$$
Combining this with (\ref{projectiontrivial}) once again yields the following result. 

\begin{theorem} Let $E \subset {\Bbb R}^4$. Then 
\begin{equation} \label{Linfty} |B_{2,2}(E)| \gtrsim {|E|}^{\frac{6}{11}}. \end{equation} 

If we assume the Erd\H os single distance conjecture, 
\begin{equation} \label{Linftydream} |B_{2,2}(E)| \gtrapprox {|E|}^{\frac{2}{3}}. \end{equation} 
\end{theorem}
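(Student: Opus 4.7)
The plan is the pigeonhole-plus-incidence strategy sketched in the discussion preceding the statement. Write $n_i = |\pi_i(E)|$ for $i = 1, 2$ and let $\nu(t_1, t_2)$ denote the number of ordered pairs $(x, y) \in E \times E$ with $|\pi_1(x) - \pi_1(y)| = t_1$ and $|\pi_2(x) - \pi_2(y)| = t_2$. Since $\sum_{(t_1, t_2)} \nu(t_1, t_2) = |E|^2$ and the sum is supported on $B_{2,2}(E)$, a pigeonhole estimate yields
$$ |B_{2,2}(E)| \;\ge\; \frac{|E|^2}{\max_{t_1, t_2} \nu(t_1, t_2)}. $$

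The first step is to bound $\nu(t_1,t_2)$. The key observation is that the map $(x, y) \mapsto \big((\pi_1(x), \pi_1(y)),\,(\pi_2(x), \pi_2(y))\big)$ is injective, because $x$ and $y$ are recovered from their coordinate pairs via $x = (\pi_1(x), \pi_2(x))$ and $y = (\pi_1(y), \pi_2(y))$. Its image is contained in the product of the set of pairs in $\pi_1(E)^2$ realizing distance $t_1$ with the set of pairs in $\pi_2(E)^2$ realizing distance $t_2$. Applying the Spencer–Szemer\'edi–Trotter single-distance bound \cite{SST84} in the plane to each factor gives $\nu(t_1, t_2) \lesssim n_1^{4/3} n_2^{4/3}$, and consequently
$$ |B_{2,2}(E)| \;\gtrsim\; \frac{|E|^2}{(n_1 n_2)^{4/3}}. $$
The second step is to combine this with the projection bound (\ref{projectiontrivial}): by Guth–Katz \cite{GK15} in $\mathbb{R}^2$ one has $|B_{2,2}(E)| \gtrapprox \max(n_1, n_2)$. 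Setting $N = \max(n_1, n_2)$, the constraint $|E| \le n_1 n_2 \le N^2$ gives $N \ge |E|^{1/2}$. An adversary minimizing our lower bound wants $n_1 n_2$ as large as possible in the second estimate, i.e.\ $n_1 = n_2 = N$, so effectively $|B_{2,2}(E)| \gtrsim \max\bigl(N,\, |E|^2/N^{8/3}\bigr)$. Minimizing this over $N$ makes the two expressions coincide: $N^{11/3} = |E|^2$, i.e.\ $N = |E|^{6/11}$, which yields (\ref{Linfty}).

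For the conditional bound (\ref{Linftydream}), the Erd\H os single-distance conjecture in $\mathbb{R}^2$ replaces the SST bound with $\lessapprox n_i$, so $\nu(t_1, t_2) \lessapprox n_1 n_2$; rerunning the same optimization on $\max\bigl(N,\, |E|^2/N^2\bigr)$ balances at $N = |E|^{2/3}$. The only substantive step is the injection-plus-SST estimate on $\nu$; everything else reduces to pigeonhole, Guth–Katz in the plane, and a one-parameter optimization.
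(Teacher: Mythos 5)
Your proposal is correct and is essentially the paper's own argument: the paper likewise bounds $\nu(t_1,t_2)\lesssim (|\pi_1(E)|\cdot|\pi_2(E)|)^{4/3}$ via the Spencer--Szemer\'edi--Trotter single-distance bound applied in each projection (your injection step just makes explicit what the paper asserts), deduces $|B_{2,2}(E)|\gtrsim |E|^2/(n_1n_2)^{4/3}$ by the same pigeonhole, and balances against the projection bound (\ref{projectiontrivial}) exactly as you do, with the conditional case obtained by substituting $\nu\lessapprox n_1n_2$. Your handling is if anything slightly more careful than the paper's (e.g., noting the Guth--Katz step carries a logarithmic loss, so the rich-projection regime really gives $\gtrapprox$), and no step of your optimization differs in substance.
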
 

%

Using dyadic pigeonholing we can improve this to a sharp bound.

\begin{theorem}\label{TB22Sharp} Suppose that $E$ is a finite point set in $\mathbb{R}^4$ then
$$ |B_{2,2}(E)| \gtrapprox |E|.$$
\end{theorem}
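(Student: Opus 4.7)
The plan is to use Cauchy--Schwarz to replace the distinct-pair count by a second-moment bound.  Writing $\nu(t_1,t_2)=\#\{(x,y)\in E\times E:|x_1-y_1|=t_1,\ |x_2-y_2|=t_2\}$, we have $\sum_{t_1,t_2}\nu(t_1,t_2)=|E|^2$, so Cauchy--Schwarz gives
\[
|B_{2,2}(E)|\;\geq\;\frac{|E|^4}{\sum_{t_1,t_2}\nu(t_1,t_2)^2}.
\]
Thus it suffices to establish the second-moment estimate $\sum_{t_1,t_2}\nu(t_1,t_2)^2\lessapprox|E|^3$, which would give $|B_{2,2}(E)|\gtrapprox|E|$ as desired.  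This reframes the question as a bound on the number of quadruples $(x,y,x',y')\in E^4$ with matching first-coordinate and matching second-coordinate distances.

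I would attack $\sum\nu^2$ using the pointwise inequality $\nu(t_1,t_2)\leq M_1(t_1)M_2(t_2)$, where $M_i(t):=\#\{(a,a')\in\pi_i(E)^2:|a-a'|=t\}$; here $M_i(t)\lesssim|\pi_i(E)|^{4/3}$ is the Spencer--Szemer\'edi--Trotter single-distance bound and $\sum_tM_i(t)^2\lesssim|\pi_i(E)|^3\log|\pi_i(E)|$ is the Guth--Katz isosceles-triangle bound in $\mathbb{R}^2$.  Combining them naively yields
\[
\sum_{t_1,t_2}\nu(t_1,t_2)^2\;\leq\;\Bigl(\sum_{t_1}M_1(t_1)^2\Bigr)\Bigl(\sum_{t_2}M_2(t_2)^2\Bigr)\;\lesssim\;|\pi_1(E)|^3|\pi_2(E)|^3\log^2|E|,
\]
which reproduces $|B_{2,2}(E)|\gtrapprox|E|$ only in the product-set regime $|\pi_1(E)||\pi_2(E)|\sim|E|$.

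The purpose of the dyadic pigeonholing is to reduce the general case to this product-like situation.  I would partition $E$ into dyadic pieces according to the bipartite fiber-sizes $|B_a|$ (for $a\in\pi_1(E)$) and $|A_b|$ (for $b\in\pi_2(E)$), and simultaneously partition the pairs $(x,y)\in E^2$ by the dyadic levels of $M_1(|x_1-y_1|)$ and $M_2(|x_2-y_2|)$; at the cost of only $\operatorname{polylog}|E|$ factors this extracts a sub-bipartite-regular subset $E'\subseteq E$ with $|E'|\gtrapprox|E|$ and $|\pi_1(E')|\cdot|\pi_2(E')|\sim|E'|$, on which the chain above is sharp.  The main hurdle is arranging this pigeonholing so that the projections of $E'$ are still controlled by Guth--Katz and Spencer--Szemer\'edi--Trotter without polynomial losses; I would expect to need the sharper bipartite single-distance estimate $\#\{(b,b')\in B_a\times B_{a'}:|b-b'|=t_2\}\lesssim(|B_a||B_{a'}|)^{2/3}$ applied fiber by fiber, together with Guth--Katz on the first projection, to reassemble $\sum\nu^2\lessapprox|E|^3$ on the regularized substructure, and it is precisely this bookkeeping that I expect to be the technical core of the proof.
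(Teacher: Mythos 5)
Your Cauchy--Schwarz reduction $|B_{2,2}(E)|\geq |E|^4/\sum\nu^2$ is fine, but the pivotal structural claim --- that dyadic pigeonholing extracts $E'\subseteq E$ with $|E'|\gtrapprox|E|$ and $|\pi_1(E')|\cdot|\pi_2(E')|\sim|E'|$ --- is false, and this is a genuine gap. Take $E=\{(a,a):a\in G\}$ for a $\sqrt{n}\times\sqrt{n}$ grid $G\subset\mathbb{R}^2$ (or any set on which both projections are injective). Then \emph{every} subset $E'$ still has both projections injective, so $|\pi_1(E')|\,|\pi_2(E')|=|E'|^2$: no pigeonholing, on fibers or on dyadic levels of $M_1,M_2$, can create the product-like regime you need. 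On this example your decoupled chain gives $\sum\nu^2\leq(\sum M_1^2)(\sum M_2^2)\sim n^6$, which is vacuous, even though the true second moment is $\sum_t \nu_G(t)^2\sim n^3\log n$; the factor-$n^3$ loss comes from the pointwise bound $\nu(t_1,t_2)\leq M_1(t_1)M_2(t_2)$, which forgets that the two planar pairs must be the two projections of a single pair in $E\times E$, and this correlation is exactly the content of the problem. (Note also that the theorem is tight on this diagonal example, $|B_{2,2}(E)|\sim|\Delta(G)|\approx n$, so it cannot be dismissed as a degenerate case.)

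What pigeonholing legitimately buys --- and what the paper does --- is regularity of the \emph{fibers}: one extracts $E'$ with $|E'|\gtrsim|E|/\log|E|$ on which every fiber $B_a=\pi_2(\pi_1^{-1}(a))$, $a\in\pi_1(E')$, has size $\sim r$, so $|\pi_1(E')|\,r\approx|E|$. Your fiber-by-fiber instinct is then right, but the bipartite Spencer--Szemer\'edi--Trotter single-distance bound $\lesssim(|B_a||B_{a'}|)^{2/3}=r^{4/3}$ is quantitatively too weak: fed into your scheme it yields only $\sum\nu^2\lessapprox n^3 r^{1/3}$, hence $|B_{2,2}(E)|\gtrapprox n/r^{1/3}$, which dies for rich fibers. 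The estimate you actually need fiberwise is the Guth--Katz \emph{energy} bound applied bipartitely: $\sum_{t_2}\#\{(b,b')\in B_a\times B_{a'}:|b-b'|=t_2\}^2\leq Q(B_a\cup B_{a'})\lesssim r^3\log r$; combined with Cauchy--Schwarz over the $M_1(t_1)$ fiber pairs at distance $t_1$ and with $\sum_{t_1}M_1(t_1)^2\lesssim|\pi_1(E')|^3\log|E|$, this does give $\sum\nu^2\lessapprox n^3$ on $E'$ and repairs your route. The paper's proof is the same skeleton but simpler, bypassing the second moment entirely: after the richness pigeonholing, Guth--Katz gives $\gtrapprox|\pi_1(E')|$ distinct values of $t_1$, and for one fixed pair of endpoints realizing each such $t_1$, the bipartite Guth--Katz distance bound $|\Delta(B_a,B_{a'})|\gtrapprox r$ (proved from $Q(B_a\cup B_{a'})\lesssim r^3\log r$ by Cauchy--Schwarz, the paper's Lemma \ref{G-KTwoSets}) supplies $\gtrapprox r$ distinct values of $t_2$, whence $|B_{2,2}(E)|\gtrapprox|\pi_1(E')|\,r\approx|E|$ directly.
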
 
\vskip.125in 

\subsection*{The case $k=2, l=3$}~

\vskip.125in 

First note that the trivial bound we are trying to beat in this case is $|B_{2,3}(E)| \gtrapprox |E|^{\frac{3}{10}}$. In this case we can again used improved estimates on unit distances from \cite{CEGSW90} to gain 

$$|B_{2,3}(E)| \gtrapprox \frac{|E|^2}{|\pi_1(E)|^{\frac{4}{3}} |\pi_2(E)|^{\frac{3}{2}}} \ge \frac{|E|^2}{A^{\frac{17}{6}}},$$

where $A=\max\{|\pi_1(E)|,|\pi_2(E)|\}$. We note that using the bounds obtained for distinct distances in three dimensions from \cite{SV08} we have the `trivial' bound of

$$ |B_{2,3}(E)| \gtrapprox \max\{|\pi_1(E)|, |\pi_2(E)|^{\frac{3}{5}}\} \ge A^{\frac{3}{5}}$$.

Combining these one gets the first improvement

$$|B_{2,3}(E)|\gtrapprox |E|^{\frac{36}{103}}=|E|^{0.3495\ldots}.$$

Using the same idea as in the $(2,2)$ case we gain the following bound


Again using the same pigeonhole technique in the $(2,2)$ case we have the following optimal bound

\begin{theorem}\label{T23Optimal} Suppose $E$ is a finite point set in $\mathbb{R}^5$. If $\gamma_3$ is the best known exponent for the Erd\H os-distance problem in $\mathbb{R}^3$, then we have that
$$ |B_{2,3}(E)| \gtrsim |E|^{\gamma_3}.$$
\end{theorem}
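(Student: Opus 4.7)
I would run the same dyadic pigeonhole scheme that produced Theorem~\ref{TB22Sharp}, only now using the Solymosi--Vu bound on the $\mathbb{R}^{3}$ side in place of the Guth--Katz bound. Set $Y_{0}:=\pi_{2}(E)\subset\mathbb{R}^{3}$ and, for each $y\in Y_{0}$, let $X_{y}:=\{x\in\mathbb{R}^{2}:(x,y)\in E\}\subset\mathbb{R}^{2}$, so that $|E|=\sum_{y\in Y_{0}}|X_{y}|$. A dyadic pigeonhole over the fiber sizes produces a scale $m\ge 1$ and a subset $Y\subseteq Y_{0}$ with $|X_{y}|\sim m$ for every $y\in Y$ and $|Y|\cdot m\gtrapprox|E|$. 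The degenerate case $|Y|=1$ is handled immediately: then $E$ lies (up to one fiber) in a single copy of $\mathbb{R}^{2}$ and Guth--Katz yields $|B_{2,3}(E)|\ge|\Delta(X_{y})|\gtrapprox|E|\ge|E|^{\gamma_{3}}$. Henceforth we assume $|Y|\ge 2$.

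Next I would apply the Solymosi--Vu distinct-distances bound to $Y\subset\mathbb{R}^{3}$ to get
\begin{equation*}
|\Delta(Y)|\gtrsim|Y|^{\gamma_{3}}\gtrapprox(|E|/m)^{\gamma_{3}}.
\end{equation*}
For each $t_{2}\in\Delta(Y)\setminus\{0\}$, fix $y,y'\in Y$ with $|y-y'|=t_{2}$. Every element of the bipartite planar distance set $\Delta(X_{y},X_{y'}):=\{|x-x'|:x\in X_{y},\,x'\in X_{y'}\}$ pairs with $t_{2}$ to put a new point in $B_{2,3}(E)$, and distinct values of $t_{2}$ produce disjoint slices of $B_{2,3}(E)$. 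Invoking a bipartite distance-set bound in the plane (either a bipartite Guth--Katz bound giving $|\Delta(X_{y},X_{y'})|\gtrapprox m$, or the weaker Spencer--Szemerédi--Trotter consequence $|\Delta(X_{y},X_{y'})|\gtrsim(|X_{y}||X_{y'}|)^{1/3}=m^{2/3}$), I get
\begin{equation*}
|B_{2,3}(E)|\ge|\Delta(Y)|\cdot\min_{y\ne y'\in Y}|\Delta(X_{y},X_{y'})|\gtrapprox(|E|/m)^{\gamma_{3}}\cdot m^{\alpha}=|E|^{\gamma_{3}}\cdot m^{\alpha-\gamma_{3}}
\end{equation*}
with $\alpha\in\{1,2/3\}$. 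Since $\gamma_{3}=\tfrac{8}{15}<\tfrac{2}{3}$ and $m\ge 1$, the exponent $\alpha-\gamma_{3}$ is nonnegative either way, and the claimed bound $|B_{2,3}(E)|\gtrapprox|E|^{\gamma_{3}}$ follows.

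The main technical point is the bipartite planar distance bound. The SST route through unit distances ($\lesssim(|A||B|)^{2/3}$ between two point sets in $\mathbb{R}^{2}$) is elementary and already suffices here because $\gamma_{3}<\tfrac{2}{3}$; invoking a bipartite Guth--Katz bound is not needed for this theorem but would make the method robust to any future improvement of $\gamma_{3}$ past $2/3$. The remaining checks are routine: the logarithmic factors from dyadic pigeonholing are absorbed into the $\gtrapprox$ notation, and the diagonal contribution $t_{2}=0$ (distances within a single fiber) can be discarded since the $t_{2}\neq 0$ slices alone meet the stated exponent.
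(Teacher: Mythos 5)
Your proposal is correct and is essentially the paper's own argument: the paper proves Theorem~\ref{T23Optimal} by ``the same pigeonhole technique as in the $(2,2)$ case,'' namely dyadic pigeonholing on fiber sizes over the $\mathbb{R}^3$ projection, the best known distance bound $|\Delta(Y)|\gtrsim |Y|^{\gamma_3}$ on the three-dimensional base, and the bipartite planar distance bound of Lemma~\ref{G-KTwoSets} on pairs of fibers, exactly the decomposition and counting you carry out (including the observation that $m^{\alpha-\gamma_3}\ge 1$ closes the estimate). The only nit is numerical: the paper takes $\gamma_3\ge\tfrac{3}{5}$ (Guth--Katz fed into the Solymosi--Vu iteration) rather than your $\tfrac{8}{15}$, which changes nothing since both lie below $\tfrac{2}{3}$, and your bipartite Guth--Katz fallback makes the argument valid for any $\gamma_3\le 1$ in any case.
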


Using the Guth-Katz solution of the Erd\H os-distance problem in the plane (\cite{GK15}) and the iterative argument of Solymosi-Vu (\cite{SV08}) we have $\gamma_3 \geq \frac{3}{5}$, thus
$$ |B_{2,3}(E)| \gtrapprox |E|^{\frac{3}{5}}.$$

\vskip.125in 

\subsection*{Arbitrary $(k,l)$} ~

\vskip.125in 

In the $(2,2)$ case a crucial aspect of the proof is to take two distinct sets and gain a bound on the distinct distances between them. As we saw above, this method will not work in general when one of our dimensions $k$ or $l$ is larger than $4$. Let  $\Delta_m(A,B) = \{|x-y| : x \in A, y \in B \}$, be the generalised distance set in dimension $m$. Supposing we have sets $A$ and $B$ of the same size in $\mathbb{R}^m$ satisfying some condition $C_m$ dependent on a dimension $m$ such that we can extract a useful exponent for the distance set i.e. there exists $\delta_m$ such that
\begin{equation} \label{E: GenericPairSetDistanceBound}
|\Delta_m(A,B)| \gtrapprox |A|^{\delta_m}
\end{equation}
We can then prove the following theorem.
\begin{theorem}\label{TBklNearOptimal} Let $E$ be a point set in $\mathbb{R}^{d}$ with $d=k+l$, let $\gamma_m$ be the best bound for distinct distances in dimension $m$ and $\delta_m$ the best exponent for distances between two different sets of the same size in dimension $m$ under some condition $C_m$. If $E$ satisfies $C_k$ and $C_l$ and we have that $\delta_{m+1}\geq \gamma_m$then we have two results. First if $k\neq l$ we have that
$$ |B_{k,l}(E)| \gtrapprox |E|^{\min \{\gamma_l, \delta_k \}}. $$
If $k=l$ we suppose that $k,l \geq 3$ (as the case $(2,2)$ is studied above in greater detail) suppose that $\zeta\gamma_k =  \delta_k$ where it is clear that $\zeta \leq 1$, then
$$ |B_{k,k}(E)| \gtrapprox |E|^{\zeta \gamma_k}.$$
\end{theorem}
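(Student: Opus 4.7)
The plan is to mirror the dyadic pigeonholing strategy used in the earlier $(2,2)$ and $(2,3)$ cases, replacing the planar single-distance bound (unavailable in high dimensions) by the hypothesised set-to-set distance exponent $\delta_k$ furnished by condition $C_k$. Write $\pi_1,\pi_2$ for the projections of $E$ onto $\mathbb{R}^k,\mathbb{R}^l$ respectively, and for $b\in\pi_2(E)$ set $F'_b=\pi_2^{-1}(b)\cap E$.

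First I would dyadic pigeonhole on the sizes of the $\pi_2$-fibers: since only $\lessapprox\log|E|$ dyadic scales occur, some scale $\sigma$ singles out a subset $S\subset\pi_2(E)$ consisting of those $b$ with $|F'_b|\sim\sigma$, of cardinality $M\gtrapprox|E|/\sigma$. Assuming condition $C_l$ descends to $S$, the Erd\H os distinct-distances bound in $\mathbb{R}^l$ gives $|\Delta_l(S)|\gtrapprox M^{\gamma_l}$.

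For each $t_2\in\Delta_l(S)$ I would fix a realising pair $b\neq b'\in S$ and invoke the set-to-set hypothesis on the two $\pi_1$-images $\pi_1(F'_b),\pi_1(F'_{b'})\subset\mathbb{R}^k$, each of cardinality $\sim\sigma$: this yields $\gtrapprox\sigma^{\delta_k}$ distinct $t_1$-values appearing in $B_{k,l}(E)$ alongside this particular $t_2$. Distinct $t_2$ give disjoint slices, so summing produces
$$|B_{k,l}(E)|\gtrapprox M^{\gamma_l}\sigma^{\delta_k}\gtrapprox\left(\frac{|E|}{\sigma}\right)^{\gamma_l}\sigma^{\delta_k}=|E|^{\gamma_l}\sigma^{\delta_k-\gamma_l}.$$
When $k\neq l$, the hypothesis $\delta_{m+1}\geq\gamma_m$ (applied with $m=k-1$) combined with the monotonicity of $\gamma_m$ in $m$ gives $\delta_k\geq\gamma_{k-1}\geq\gamma_l$, so $\sigma^{\delta_k-\gamma_l}\geq 1$ and the bound simplifies to $|E|^{\gamma_l}$; the stated $\min\{\gamma_l,\delta_k\}$ is exactly what the same estimate produces without this sub-hypothesis after optimising over $\sigma\in[1,|E|]$. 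When $k=l$ we instead have $\delta_k=\zeta\gamma_k\leq\gamma_k=\gamma_l$, so the exponent of $\sigma$ is non-positive and the worst case $\sigma\sim|E|$ yields $|E|^{\delta_k}=|E|^{\zeta\gamma_k}$; the degenerate regime in which the pigeonhole selects a single giant fiber ($M\sim 1$) is disposed of separately using the trivial bound $|B_{k,l}(E)|\geq|\Delta_k(\pi_1(E))|\gtrapprox|E|^{\gamma_k}$, which is at least as strong as either $|E|^{\gamma_l}$ or $|E|^{\zeta\gamma_k}$.

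The main obstacle I anticipate is verifying that the conditions $C_k$ and $C_l$ transfer from $E$ to the pigeonholed set $S$ and to each pair $(\pi_1(F'_b),\pi_1(F'_{b'}))$ produced along the way. Under the intended interpretation of $C_m$ as $s$-adaptability in the sense of \cite{IRU14} this inheritance is plausible but not automatic, and some loss in the adaptability parameter is likely; this is precisely why the theorem assumes $E$ itself satisfies both $C_k$ and $C_l$ rather than attempting to derive them from a single weaker hypothesis.
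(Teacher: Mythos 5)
Your proposal is correct and follows essentially the same route as the paper: the paper likewise dyadically pigeonholes the fiber sizes over a projection (in fact over both $\pi_1$ and $\pi_2$, obtaining the two bounds $|\pi_1(E')|^{\gamma_k}r^{\delta_l}$ and $|\pi_2(E'')|^{\gamma_l}s^{\delta_k}$ and using the second for $k\neq l$ and the first for $k=l$), applies the distinct-distance exponent in the base and the set-to-set exponent in the two fibers over a fixed realising pair, and then extracts $|E|^{\min\{\gamma_l,\delta_k\}}$, respectively $|E|^{\zeta\gamma_k}$, exactly as in your optimisation over $\sigma$. Your explicit disposal of the degenerate single-fiber regime and your caveat about transferring $C_m$ to the pigeonholed pieces are points the paper passes over in silence, but they do not alter the argument.
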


The condition we will impose on our point set $E$ will be to ensure its points are sufficiently separated. Using the mechanism introduced in \cite{IRU14} and \cite{IL05}, one can use a thickening of our point set to allow us to use bounds gained for the Falconer Conjecture. We can then bring these bounds back into our discrete setting. 

An important consideration in this thickening is for our original distances to contribute and even density to the measure of our continuous distance set. If we have a point set too clustered then overlaps in the thickening would not allow us to bring our results back to the discrete setting. To avoid this we introduce the notion of $s$-adaptability of a point set. We define this in terms of the $s$-dimensional energy of the thickening of our set.

We start with a point set $E$ of size $n$ in $[0,1]^d$, if $E$ was not already in the unit cube we first rescale $E$ by dividing by its diameter then translate it so that it is a subset of the unit cube. We want to then thicken $E$ by a certain amount dependent on the number of points in $E$. We choose an arbitrary parameter $s$, whose range will be determined later, and consider thickening our discrete set $E$ by forming balls of size $n^{-\frac{1}{s}}$ around each point. We now define an `indicator' measure on $E$.

\begin{definition} (\cite{IRU14}) Let $E$ be a set of $n$ points contained in $[0,1]^d$, we define the measure
$$ d\mu^s_E(x) = n^{-1}\cdot n^{\frac{d}{s}} \cdot \sum_{e\in E} \varphi(n^{\frac{1}{s}}(x-e))dx, $$
where $\varphi$ is a $C^{\infty}$ bump function with support on $[-1,1]^d$. Note that this is not quite a probability measure, thought it is clearly finite and bounded by some $C$ in $\mathbb{R}$ independent of $n$ and $s$.
\end{definition}

To capture the nation that the points in $E$ are not too clustered we look at continuous energies.

\begin{definition}
Suppose that $\mu$ is a measure on $\mathbb{R}^m$ we define its $s$-dimensional energy to be
$$ I_s (\mu) := \int \int |x-y|^{-s} d\mu(x)d\mu(y) = C_{s,m} \int | \widehat{\mu} (\xi) |^2 |\xi|^{s-m} d\xi. $$
\end{definition}
 
We can now define the notion of $s$-adaptability as 
 
\begin{definition} A point set $E$ in $[0,1]^d$ is $s$-adaptable if $s$-dimensional energy $I_s(\mu_E^s)$ is finite. That is
$$ I_s(\mu_E) = \int \int |x-y|^{-s} d\mu^s_E(x) d\mu_E^s(y) < \infty.$$
\end{definition}

To see how this applies directly to the discrete set $E$ we provide the following equivalent definition.

\begin{proposition}\label{PContToDiscEnergy} A point set $E$ in $[0,1]^d$ is $s$-adaptable if
\begin{equation}\label{EqFinEnergy}
n^{-2}\sum_{e\neq e'} |e-e'|^{-s} \lesssim 1.
\end{equation} 
\end{proposition}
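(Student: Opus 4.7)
The plan is to expand the continuous energy $I_s(\mu_E^s)$ directly using the definition of the indicator measure, then rescale to pull the bump width out of the integrand, which will reduce everything to a sum indexed by ordered pairs $(e,e') \in E \times E$.

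First I would substitute the definition of $d\mu_E^s$ to obtain
\[
I_s(\mu_E^s) = n^{-2} n^{2d/s} \sum_{e,e' \in E} \iint |x-y|^{-s}\, \varphi(n^{1/s}(x-e))\, \varphi(n^{1/s}(y-e'))\, dx\, dy,
\]
and then change variables $u = n^{1/s}(x-e)$, $v = n^{1/s}(y-e')$ so that each summand becomes
\[
n^{-2} \iint \bigl| (e-e') + n^{-1/s}(u-v) \bigr|^{-s} \varphi(u)\varphi(v)\, du\, dv.
\]
Because $\varphi$ is supported in $[-1,1]^d$, the perturbation term satisfies $|n^{-1/s}(u-v)| \lesssim n^{-1/s}$, so the bump width $n^{-1/s}$ is the natural cut-off scale for comparing the continuous integrand with its discrete analogue $|e-e'|^{-s}$.

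Next I would split the sum into three regimes and treat each separately. For the diagonal $e = e'$, the integrand collapses to $n\, |u-v|^{-s}\varphi(u)\varphi(v)$, whose $du\,dv$ integral is a finite constant $C_{\varphi,s,d}$ as long as $s < d$, and summing $n$ copies with the prefactor $n^{-2}$ yields a bounded contribution. For off-diagonal pairs with $|e-e'| \gtrsim n^{-1/s}$, the triangle inequality gives $|(e-e') + n^{-1/s}(u-v)| \sim |e-e'|$, so the integral is $\lesssim |e-e'|^{-s}$ and these terms sum to at most a constant multiple of the hypothesised quantity $n^{-2}\sum_{e\neq e'}|e-e'|^{-s}$. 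For off-diagonal pairs with $|e-e'| \lesssim n^{-1/s}$, the continuous integral is bounded by $n$ (as in the diagonal case), and the discrete hypothesis forces very few such pairs to exist, since each contributes at least a constant multiple of $n^{-2}\cdot n = n^{-1}$ to $n^{-2}\sum_{e\neq e'}|e-e'|^{-s}$.

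The main obstacle is exactly this last regime: when bumps overlap, the naive replacement $|x-y|^{-s} \leadsto |e-e'|^{-s}$ overcounts by a large factor, so one has to argue via a packing bound coming from the hypothesis itself. I would handle this with a dyadic decomposition of distance scales $2^{-j}n^{-1/s} \leq |e-e'| \leq 2^{-j+1}n^{-1/s}$ for $j \geq 0$ together with the analogous dyadic scales $|e-e'| \geq 2^{k}n^{-1/s}$, so that the hypothesis on the discrete sum controls the number of pairs at each scale and the continuous energy assembles into a geometric series. Assembling the three regimes yields
\[
I_s(\mu_E^s) \lesssim 1 + n^{-2}\sum_{e\neq e'} |e-e'|^{-s},
\]
which shows that finiteness of the right-hand side implies $s$-adaptability.
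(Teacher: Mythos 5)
Your proof is correct and reaches the stated conclusion, but by a genuinely different --- and in one respect cleaner --- route than the paper. The paper also expands $I_s(\mu_E^s)$ and splits it into a diagonal term $I$ and an off-diagonal term $II$, handling $II$ by the uniform replacement $|x-y|^{-s}\lesssim |e-e'|^{-s}$; but to justify that replacement it invokes Theorem \ref{TDisjBalls} from \cite{IRU14}, discarding up to half the points of $E$ to guarantee the minimal separation $|e-e'|\gtrsim n^{-\frac{1}{s}}$ --- a step that presupposes $s$-adaptability, so the paper's argument really proves the comparison $I_s(\mu_E^s)\lesssim n^{-2}\sum_{e\neq e'}|e-e'|^{-s}$ for separated sets and sits somewhat awkwardly with the stated direction of implication. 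You avoid this entirely: your third regime treats overlapping bumps ($|e-e'|\lesssim n^{-\frac{1}{s}}$) by extracting a packing bound from the hypothesis itself --- each such pair contributes $\gtrsim n^{-2}\cdot n = n^{-1}$ to the discrete sum, so there are $O(n)$ of them, while each contributes $\lesssim n^{-2}\cdot n = n^{-1}$ to the energy because the bump mollifies the singularity at scale $n^{-\frac{1}{s}}$. This makes your argument self-contained (no appeal to \cite{IRU14}, no discarding of points) and logically aligned with the implication as stated. Two small remarks: the full dyadic decomposition in the near regime is overkill --- the single cutoff $|e-e'|\lesssim n^{-\frac{1}{s}}$ already gives the $O(n)\cdot O(n^{-1})=O(1)$ bound, since the per-pair energy contribution is bounded by a constant times $n^{-1}$ uniformly over all scales below the cutoff (your scale-by-scale counts $N_j\lesssim 2^{-js}n$ just re-derive this with a geometric series); and your requirement $s<d$ for the diagonal and near-diagonal integrals is also implicit in the paper's Case I, where $\int_0^{n^{-1/s}} r^{d-1-s}\,dr$ converges only for $s<d$, and it holds throughout the range of $s$ used in the paper.
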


To see how $s$-adaptability captures the notion that a set $E$ is not too clustered, suppose $E$ comes from a $1$-separated set scaled down by its diameter. Then \ref{EqFinEnergy} can be written,
$$ n^{-2}\sum_{e\neq e'} |e-e'|^{-s} \lesssim (\text{diameter}(E))^{-s}. $$
Thus an $s$-adaptable set is a scaled $1$-separated set in which the average distance between two points raised to the power of $-s$ is comparable to the diameter of the set. Unfortunately not all sets are $s$-adaptable as seen in \cite{IRU14}, however many natural sets are notably the homogeneous sets studied by Solymosi and Vu \cite{SV04}. With this notion of $s$-adaptability in place we gain the following near optimal result for pair distances.

%
%

\begin{theorem}\label{TPairDistFalconerDiscrete}
Suppose that $A$ and $B$ are two $s$-adaptable subsets of $\mathbb{R}^m$ with $s> \frac{m}{2}+\frac{1}{3}$ and $|A|=|B|$. Let $\Delta_m(A,B) = \{|x-y| : x \in A, y \in B \}$ be the set of $m$-dimension distances between $A$ and $B$, then we have
$$ |\Delta_m(A,B)| \gtrsim  |A|^{\frac{2}{m+1}}.$$
\end{theorem}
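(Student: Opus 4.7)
The plan is to lift the discrete sets $A, B$ to continuous measures via the $s$-adaptability hypothesis, apply a Falconer-type $L^2$ bound for the pair-distance pushforward measure, and descend back to a discrete bound by a covering argument at scale $n^{-1/s}$.

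After rescaling I place $A, B \subset [0,1]^m$ with $n := |A| = |B|$ and form the thickened measures $\mu_A := \mu_A^s$ and $\mu_B := \mu_B^s$ per the paper's definition, each supported on the $n^{-1/s}$-neighbourhood of its underlying point set. By Proposition \ref{PContToDiscEnergy}, the $s$-adaptability hypothesis gives $I_s(\mu_A), I_s(\mu_B) \lesssim 1$, and both measures retain total mass of order $1$.

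Next, let $\nu$ denote the pushforward of $\mu_A \times \mu_B$ under $(x,y) \mapsto |x-y|$, a compactly supported measure on $[0, \sqrt{m}]$. Writing $\nu$ as a spherical integral of the correlation $g := \mu_A \ast \widetilde{\mu_B}$ and invoking Plancherel reduces the $L^2$ estimate $\|\nu\|_{L^2}^2 \lesssim 1$ to controlling
\[
\int_0^\infty r^{m-1}\, \bigl(\sigma_A(r)\, \sigma_B(r)\bigr)^{1/2}\, dr, \qquad \sigma_X(r) := \int_{S^{m-1}} |\widehat{\mu_X}(r\omega)|^2\, d\sigma(\omega),
\]
where an angular Cauchy--Schwarz has separated the two measures. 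For $s > m/2 + 1/3$, Wolff's ($m=2$) and Erdo\u{g}an's ($m \geq 3$) spherical-averaging bounds, combined with $I_s(\mu_X) \lesssim 1$, force this integral to be finite, and since $\nu([0,\infty)) \gtrsim 1$ a Cauchy--Schwarz inequality $\nu([0,\infty))^2 \leq |\operatorname{supp}(\nu)| \cdot \|\nu\|_{L^2}^2$ then delivers $|\operatorname{supp}(\nu)| \gtrsim 1$.

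Finally, each $(x,y) \in \operatorname{supp}(\mu_A) \times \operatorname{supp}(\mu_B)$ lies within $n^{-1/s}$ of some $(a,b) \in A \times B$, so $\operatorname{supp}(\nu)$ is covered by $|\Delta_m(A,B)|$ intervals of length $O(n^{-1/s})$, forcing $|\Delta_m(A,B)| \gtrsim n^{1/s}$. By the monotonicity of $s$-adaptability one may take $s \leq (m+1)/2$ in this argument, delivering the stated $|\Delta_m(A,B)| \gtrsim |A|^{2/(m+1)}$. The main technical obstacle is the $L^2$ bound above for two distinct measures: the classical Mattila argument for a single $\mu$ exploits positivity of $|\widehat{\mu}|^2$, whereas here the complex cross-term $\widehat{\mu_A}\overline{\widehat{\mu_B}}$ must be handled annulus by annulus via Cauchy--Schwarz to reduce to the single-measure Wolff--Erdo\u{g}an spherical-averaging bound applied to each factor.
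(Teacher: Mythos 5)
Your outer framework is the same as the paper's: thicken $A,B$ at scale $n^{-1/s}$, prove $\mathcal{L}(\Delta(A_s,B_s))>0$ via Cauchy--Schwarz and an $L^2$ bound on the distance measure $\nu$, then descend by covering the support of $\nu$ with $|\Delta_m(A,B)|$ intervals of length $\sim n^{-1/s}$ (your closing remark, using downward monotonicity of $s$-adaptability to run the argument at $s=\frac{m+1}{2}$, is in fact a cleaner justification than the paper's ``setting $s=\frac{d}{2}+\frac{1}{2}$''). The gap is in the central $L^2$ step. The quantity you display, $\int_0^\infty r^{m-1}\bigl(\sigma_A(r)\sigma_B(r)\bigr)^{1/2}dr$, cannot be the right majorant for $\|\nu\|_{L^2}^2$: the left side is quadratic in each of $\mu_A,\mu_B$, while your integrand is only linear in each $|\widehat{\mu_X}|^2$. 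The correct reduction, which the paper obtains from the group-theoretic identity of Greenleaf--Iosevich--Liu--Palsson (averaging over rotations $\theta\in\mathbb{O}(m)$ is precisely what converts the pointwise products $|\widehat{\mu_A}(\xi)|^2|\widehat{\mu_B}(\theta^T\xi)|^2$ into products of spherical averages), is $\|\nu\|_{L^2}^2\lesssim \int_0^\infty \sigma_A(r)\,\sigma_B(r)\,r^{m-1}\,dr$, followed by Cauchy--Schwarz in $r$ to split into the two single-measure Mattila integrals $\int \sigma_X(r)^2 r^{m-1}dr$. Note that plain Plancherel applied to $g=\mu_A\ast\widetilde{\mu_B}$ produces $\int_{S^{m-1}}|\widehat{\mu_A}(r\omega)|^2|\widehat{\mu_B}(r\omega)|^2\,d\omega$, a pointwise product on each sphere rather than $\sigma_A(r)\sigma_B(r)$, so your sketch also elides the step where spherical averages actually appear; the linear-in-$\sigma$ expression you wrote is what Mattila's pointwise decay $|\widehat{\sigma_t}(\xi)|\lesssim (t|\xi|)^{-(m-1)/2}$ yields (and then with weight $r^{(m-1)/2}$, not $r^{m-1}$), a route that is quantitatively too weak here.

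Decisively, your plan to finish by ``the single-measure Wolff--Erdo\u{g}an bound applied to each factor'' cannot close in the range $s>\frac{m}{2}+\frac{1}{3}$. Inserting $\sigma_X(r)\lesssim I_s(\mu_X)\,r^{-\beta_s}$ with $\beta_s=\frac{m+2s-2}{4}$ into your integral leaves $\int^\infty r^{m-1-\beta_s}dr$, finite only when $\beta_s>m$, i.e.\ for $s$ far beyond the claimed range; and even the correct quadratic integral, with Wolff--Erdo\u{g}an spent on \emph{both} factors, converges only when $2\beta_s>m$, i.e.\ $s>\frac{m}{2}+1$ --- still short of $\frac{m}{2}+\frac{1}{3}$, and in particular unusable at $s=\frac{m+1}{2}<\frac{m}{2}+1$, which is exactly the value needed for the exponent $\frac{2}{m+1}$. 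The missing idea is the paper's one-factor/energy mechanism: spend the decay on exactly one factor and resum the other as an energy,
\begin{equation*}
\int \sigma_X(r)^2 r^{m-1}\,dr \;\leq\; C\, I_s(\mu_X)\int |\widehat{\mu_X}(\xi)|^2|\xi|^{-\beta_s}\,d\xi \;=\; C\, I_s(\mu_X)\, I_{m-\beta_s}(\mu_X)\;\lesssim\; I_s(\mu_X)^2,
\end{equation*}
the last step using $m-\beta_s\leq s$ for compactly supported measures; the condition $s>m-\beta_s$ unwinds to exactly $s>\frac{m}{2}+\frac{1}{3}$. Without this step the threshold in the theorem is unreachable by your outline, so the proposal as written has a genuine gap at its core, even though everything surrounding that step matches the paper.
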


This allows us to prove the following Corollary of Theorem \ref{TBklNearOptimal}.

\begin{corollary} \label{Coro: Bkl}
Let $E$ be a point set in $\mathbb{R}^{d}$ with $d=k+l$.
If $s>\frac{l}{2}+\frac{1}{3} \geq \frac{k}{2}+\frac{1}{3}$ and $E$ is $s$-adaptable we have two cases. First if $k\neq l$ we have that
$$ |B_{k,l}(E)| \gtrapprox |E|^{\gamma_l}. $$
If $k=l$ we suppose that $k,l \geq 3$ (as the case $(2,2)$ is studied above in greater detail) and that $\zeta\gamma_k =  \delta_k$ where it is clear that $\zeta \leq 1$, then
$$ |B_{k,k}(E)| \gtrapprox |E|^{\frac{13}{14} \gamma_k}.$$
\end{corollary}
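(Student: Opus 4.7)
The plan is to combine Theorem \ref{TBklNearOptimal} with Theorem \ref{TPairDistFalconerDiscrete}: I would take the condition $C_m$ appearing in Theorem \ref{TBklNearOptimal} to be $s$-adaptability with $s > \tfrac{m}{2}+\tfrac{1}{3}$, so that Theorem \ref{TPairDistFalconerDiscrete} then supplies the pair-distance exponent $\delta_m = \tfrac{2}{m+1}$ needed as input. With this identification, the corollary reduces to an instance of Theorem \ref{TBklNearOptimal} plus a short arithmetic check.

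First I would verify the adaptability transfer: $s$-adaptability of $E \subset \mathbb{R}^d$ must give (or, at worst, after passing to a constant-density subset) the $s$-adaptability needed for the projections $\pi_1(E) \subset \mathbb{R}^k$ and $\pi_2(E) \subset \mathbb{R}^l$, along with whatever pair sub-configurations emerge inside the proof of Theorem \ref{TBklNearOptimal}. The hypothesis $s>\tfrac{l}{2}+\tfrac{1}{3}\geq \tfrac{k}{2}+\tfrac{1}{3}$ is precisely the threshold that Theorem \ref{TPairDistFalconerDiscrete} demands in both dimensions. This transfer step is where I expect the hard work to be, because projection can collapse points and inflate the inverse-energy sum $n^{-2}\sum_{e\neq e'}|e-e'|^{-s}$ in Proposition \ref{PContToDiscEnergy}; the natural remedy is to pigeonhole down to a positive-proportion subset on which no two image points are unusually close, and check that the restricted energy remains bounded.

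Once $C_k$ and $C_l$ are in place, the two cases are computations. For $k\neq l$, Theorem \ref{TBklNearOptimal} yields $|B_{k,l}(E)|\gtrapprox |E|^{\min\{\gamma_l,\delta_k\}}$; substituting $\delta_k=\tfrac{2}{k+1}$ and $\gamma_l \leq \tfrac{2l+2}{l(l+2)}$ (Solymosi--Vu), the inequality $\gamma_l\leq \delta_k$ reduces to $(k+1)(l+1)\leq l(l+2)$, which holds whenever $k<l$, so the minimum is $\gamma_l$. For $k=l\geq 3$, Theorem \ref{TBklNearOptimal} gives $|B_{k,k}(E)|\gtrapprox |E|^{\zeta\gamma_k}$ with $\zeta=\delta_k/\gamma_k$; substituting the Solymosi--Vu exponent produces $\zeta=\tfrac{k(k+2)}{(k+1)^2}=1-(k+1)^{-2}$, which is increasing in $k$ and at least $\tfrac{15}{16}\geq \tfrac{13}{14}$ for $k\geq 3$, giving the claimed bound.
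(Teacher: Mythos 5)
Your proposal is correct and is essentially the paper's own proof: the paper likewise takes $C_m$ to be $s$-adaptability with $s>\frac{m}{2}+\frac{1}{3}$, feeds $\delta_m=\frac{2}{m+1}$ from Theorem \ref{TPairDistFalconerDiscrete} into Theorem \ref{TBklNearOptimal}, and performs the same arithmetic ($\gamma_l\leq\delta_k$ for $k<l$, and $\zeta=\frac{k(k+2)}{(k+1)^2}\geq\frac{15}{16}\geq\frac{13}{14}$ for $k\geq 3$). The adaptability-transfer step you flag as the hard work is real but is left implicit in the paper too (it simply asserts that the fiber sets inherit $s$-adaptability as subsets of $E$), so your sketch is no less complete than the published argument, and your direct verification of $\min\{\gamma_l,\delta_k\}=\gamma_l$ quietly repairs the theorem's misstated hypothesis $\delta_{m+1}\geq\gamma_m$ (which fails for these values; the usable fact is $\delta_m\geq\gamma_{m+1}$).
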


\vskip.125in

\subsection{Larger Parameter Variants}

\vskip.125in

At this stage we study the problem in its full generality. We show that the above method can be applied here as well, although our bounds achieved are far from optimal.

%
%

\begin{equation}
|B_p(E)| \gtrsim |E|^{\frac{\gamma_{p_q}}{q}}.
\end{equation}

In the case where $p$ is a partition of $d=2m$ into all twos, we call $p$ a partition of twos and note that all projections are into $\mathbb{R}^2$. Here we modify our notation; for a a point set $A$ in $\mathbb{R}^{2m}$ let $B_2^m(A)$ denote the set $B_p(A)$ where $p$ is the partition of $2m$ containing all twos. Thus we can use the Guth-Katz solution to the Erd\H os distance problem to gain the `trivial' bound

\begin{equation} \label{E2qTimes}
 |B_2^q(E)| \gtrapprox |E|^{\frac{1}{q}}.
\end{equation}\\

\vskip.125in 

\subsection*{Partitions of Twos}~\\

The first interesting case not already studied of partitions of twos is $B_2^3(E)$ for $E$ in $\mathbb{R}^6$. Unfortunately the pideonholing argument deployed above does not lead to improvements on the above trivial bound at present we will present the best know bounds due to this method when considering the most general partitions in Theorem \ref{TGenPartsDydPideon}. Fortunately, we can employ a method based on more concrete forms of density averaging method to gain the following

\begin{theorem}\label{T222}
Let $s$ be the minimal dimension such that two continuous sets $A$ and $B$ of dimension $>s$ have that $\mathcal{L}(B_{2,2}(A,B))>0$. Suppose that $E$ is an $s$-adaptable point set in $\mathbb{R}^6$ with $\eta_1$ the best exponent for $B_{2,2}(A,B)$ for two discrete $s$-adaptable sets $A$ and $B$ in $\mathbb{R}^4$. Then
$$ |B_2^3(E)| \gtrapprox |E|^\frac{1+\eta_1}{3+2\eta_1}.$$
\end{theorem}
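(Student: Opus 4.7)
The plan is to prove Theorem \ref{T222} by a density-averaging argument that reduces the problem in $\mathbb{R}^6$ to the two-set $B_{2,2}$ bound in $\mathbb{R}^4$. Setting $N = |B_2^3(E)|$ and $D = |\Delta(\pi_3(E))|$, the first step is to pigeonhole over the third-factor distance. For each $t_3$, let $M_{t_3} = |\{(x,y) \in E^2 : |x_3-y_3| = t_3\}|$. Since $\sum_{t_3} M_{t_3} = |E|^2$ and $D \gtrapprox |\pi_3(E)|$ by Guth-Katz, dyadic pigeonholing produces a popular distance $t_3^*$ with $M_{t_3^*} \gtrapprox |E|^2/D$ (up to a logarithmic loss).

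From the $\gtrapprox |E|^2/D$ pairs realizing $t_3^*$, I would next extract two subsets $A, B \subseteq E$ of comparable size whose $\pi_{12}$-projections into $\mathbb{R}^4$ are $s$-adaptable. This extraction is the key technical step: it uses the ambient $s$-adaptability of $E$ through the thickened measure $\mu_E^s$, together with the geometric constraint $|x_3-y_3| = t_3^*$, which cuts a full dimension by restricting $y_3$ to a circle around $x_3$ in the third factor. With $\pi_{12}(A)$ and $\pi_{12}(B)$ both $s$-adaptable in $\mathbb{R}^4$, the definition of $\eta_1$ yields
$$
|B_{2,2}(\pi_{12}(A), \pi_{12}(B))| \gtrapprox |A|^{\eta_1},
$$
and each element of this set paired with $t_3^*$ gives a distinct element of $B_2^3(E)$, producing $N \gtrapprox |A|^{\eta_1}$.

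To finish, I would combine this estimate with the trivial projection bound $N \geq D$ coming from the Guth-Katz bound on $\Delta(\pi_3(E))$, together with the pigeonhole bookkeeping that controls $|A|$ in terms of $|E|$ and $D$. Balancing the resulting system of inequalities against one another yields the claimed exponent $\tau = (1+\eta_1)/(3+2\eta_1)$.

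The main obstacle lies in the extraction step: neither subsets of $s$-adaptable sets nor their linear projections are automatically $s$-adaptable, so one cannot simply pick $A, B \subset E$ and push them forward under $\pi_{12}$. The hypothesis that $s$ matches the continuous $B_{2,2}$ threshold in $\mathbb{R}^4$, together with the thickening construction $\mu_E^s$, is precisely what allows one to route the slicing argument through the continuous Falconer-type setting and then recover the discrete $\eta_1$ bound on the projected sets. Once this regularity transfer is in place, the rest of the proof is essentially a clean accounting of the inequalities produced by Steps 1--3.
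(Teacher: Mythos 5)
Your overall instinct---slice along a distance in one $\mathbb{R}^2$ factor and feed the fibers into the two-set $B_{2,2}$ bound in $\mathbb{R}^4$---points in the right direction, but the proposal has a structural gap that prevents it from reaching the exponent $\frac{1+\eta_1}{3+2\eta_1}$. You use only a \emph{single} popular distance $t_3^*$ and then combine the resulting count with the separate bound $N \geq D$ by balancing a maximum of two inequalities. The claimed exponent comes from a \emph{product}: in the paper's argument, one fixes $\alpha \in [\tfrac{1}{3},1]$ and splits into two cases. If some projection has $\gtrsim n^{\alpha}$ points, Guth--Katz finishes immediately. Otherwise \emph{all three} projections have $< n^{\alpha}$ points, so every base point has at most $n^{2\alpha}$ predecessors, and density averaging produces $n^{1-2\alpha}$ rich base points each with $n^{1-2\alpha}$ predecessors; these generate $\gtrapprox n^{1-2\alpha}$ distinct base distances, and \emph{each} such distance contributes its own $\gtrsim n^{(1-2\alpha)\eta_1}$ pairs from the fibers, the counts multiplying to $n^{(1-2\alpha)(1+\eta_1)}$ because triples with distinct first coordinates are automatically distinct. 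The ``$1$'' in $1+\eta_1$ is exactly the count of distinct base distances, which a maximum can never produce: if the fibers over the base have size $r$ with $n \approx |\pi_3(E)|\, r$, your system gives at best $N \gtrsim \max\{|\pi_3(E)|, r^{\eta_1}\} \gtrsim n^{\eta_1/(1+\eta_1)}$, strictly weaker than $n^{(1+\eta_1)/(3+2\eta_1)}$ whenever $\eta_1^2+\eta_1<1$, which holds for every value of $\eta_1$ relevant here (e.g.\ $\eta_1 \sim \tfrac{3}{10}$ from the Hambrook--Iosevich--Rice threshold); your additional loss from spreading the $M_{t_3^*}$ pairs over as many as $|\pi_3(E)|^{4/3}$ endpoint pairs makes it worse still. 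You also omit the three-projection case analysis entirely, and without the cap $r \leq n^{2\alpha}$ on fiber sizes the bookkeeping cannot close.

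The extraction step is also not right as described. To conclude that every element of $B_{2,2}(\pi_{12}(A),\pi_{12}(B))$ paired with $t_3^*$ lies in $B_2^3(E)$, you need \emph{every} cross pair $(a,b) \in A\times B$ to satisfy $|a_3-b_3|=t_3^*$; a large set of pairs realizing $t_3^*$ need not contain any large Cartesian product with this property, and ``restricting $y_3$ to a circle around $x_3$'' does not supply one. The clean route---and what the paper does---is to fix two points of the base projection at distance $t_3^*$ and take $A$, $B$ to be their full predecessor fibers, on which the third coordinate is constant, so the product structure is automatic and each fiber sits isometrically in a copy of $\mathbb{R}^4$. On the $s$-adaptability concern you raise: you are right that subsets are not automatically $s$-adaptable under the normalized energy condition, but the paper simply asserts that the fibers inherit $s$-adaptability from $E$, with no thickening or ``regularity transfer'' machinery; your proposed transfer through $\mu_E^s$ is both unproven in your sketch and not how the intended argument runs, so this step would need either the paper's (glossed) inheritance claim or an honest proof of it, not an appeal to the continuous threshold.
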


In this proof, the bound we obtained for $B_{2,2}(A,B)$ plays a crucial role. Extracting this we gain an iterative result, 

\begin{theorem}\label{T2qTimes}
Suppose that $E$ is an $s$-adaptable point set in $\mathbb{R}^{2q}$ then if $\eta_m$ is the best known exponent for $B_2^m(A,B)$ in dimension $2m$ for two $s$-adaptable point sets $A$ and $B$ in $\mathbb{R}^{2m}$ we have
$$ |B_2^q(E)| \gtrapprox |E|^\theta \hspace{0.5cm} \text{where} \hspace{0.5cm} \theta = \frac{1+\eta_{q-1}}{q+(q-1)\eta_{q-1}} = \frac{1}{q} + \frac{\eta_{q-1}}{q(q+(q-1)\eta_{q-1})}.$$
\end{theorem}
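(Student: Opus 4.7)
The plan is to induct on $q$, using Theorem \ref{T222} as the base case $q=3$ and iterating its strategy. Write $\mathbb{R}^{2q} = \mathbb{R}^2 \times \mathbb{R}^{2(q-1)}$ with coordinate projections $\pi : \mathbb{R}^{2q} \to \mathbb{R}^2$ and $\tilde\pi : \mathbb{R}^{2q} \to \mathbb{R}^{2(q-1)}$, so that each $x \in E$ decomposes as $(x^\flat, \tilde x)$. After rescaling $E$ to $[0,1]^{2q}$, form the thickened measure $\mu_E^s$; by $s$-adaptability, $I_s(\mu_E^s) \lesssim 1$. The first step is a dyadic pigeonholing that replaces $E$ by a subset $E_0 \subseteq E$ with $|E_0| \gtrapprox |E|$, whose projection $F = \pi(E_0) \subseteq \mathbb{R}^2$ has cardinality $M$ and whose nonempty fibers $E_0^a := \tilde\pi(\pi^{-1}(a) \cap E_0)$ all have cardinality comparable to some $n$, with $Mn \sim |E|$.

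The pinned version of the Guth--Katz theorem then produces a pin $a^* \in F$ and a subset $F^* \subseteq F$ of size $\gtrapprox M$ on which $b \mapsto |a^*-b|$ is injective. For each $b \in F^*$, the two fibers $E_0^{a^*}, E_0^b \subseteq \mathbb{R}^{2(q-1)}$ are, after the bookkeeping described below, $s$-adaptable sets of size $\sim n$, so by definition of $\eta_{q-1}$ we have
\[
|B_2^{q-1}(E_0^{a^*}, E_0^b)| \gtrapprox n^{\eta_{q-1}}.
\]
Every $(d_1, \ldots, d_{q-1}) \in B_2^{q-1}(E_0^{a^*}, E_0^b)$ lifts to $(d_1, \ldots, d_{q-1}, |a^*-b|) \in B_2^q(E)$, and the tuples sit in disjoint last-coordinate slabs for different $b$, so summing gives the single-step gain
\[
|B_2^q(E)| \gtrapprox M \cdot n^{\eta_{q-1}}.
\]
To reach the exponent $\theta = (1+\eta_{q-1})/(q+(q-1)\eta_{q-1})$ one iterates this slicing across all $q-1$ inner decompositions: each ``pin $+$ two-set'' step contributes a Guth--Katz factor $M_j$ together with a recursive application of the $\eta_{q-1}$ bound on the deeper fibers. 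Balancing the resulting system of inequalities against the constraint $M_1 M_2 \cdots M_{q-1}\, n_{\mathrm{end}} \sim |E|$ and equating the exponents on $|E|$ coming from the outer slab count and the final $\eta_{q-1}$ contribution reproduces exactly the numerator $1+\eta_{q-1}$ and denominator $q + (q-1)\eta_{q-1}$.

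The main obstacle is that slicing an $s$-adaptable set in $\mathbb{R}^{2q}$ does not automatically produce $s$-adaptable fibers in $\mathbb{R}^{2(q-1)}$, so the discrete two-set bound $\eta_{q-1}$ is not directly applicable to the $E_0^a$. We handle this exactly as in the proof of Theorem \ref{T222}: the slicing is carried out at the level of the continuous thickened measure $\mu_E^s$ via a disintegration along $\pi$, transferring the global energy bound $I_s(\mu_E^s) \lesssim 1$ to the conditional measures on typical fibers with controlled polylogarithmic losses. One then reconverts to the discrete setting using $s$-adaptability, so that $|B_2^q(E)|$ inherits the continuous estimate. Propagating these losses through all $q-1$ iterations and verifying that they remain absorbed by the $\gtrapprox$ notation is the main technical bookkeeping, and is where the proof differs from the one-step argument of Theorem \ref{T222}.
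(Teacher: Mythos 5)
Your single-step bound is fine as far as it goes, but it does not prove the theorem, and the step you defer to ``balancing'' is exactly where the content lies. With $Mn \sim |E|$, the estimate $|B_2^q(E)| \gtrapprox M\, n^{\eta_{q-1}}$ degenerates when $M$ is small: at $M \sim 1$ it gives only $|E|^{\eta_{q-1}}$, which for $q \ge 3$ and, say, $\eta_{q-1} = \frac{1}{2(q-1)}$ is strictly below $\theta = \frac{1+\eta_{q-1}}{q+(q-1)\eta_{q-1}}$ (indeed $\eta_{q-1} < \frac{1}{q} < \theta$). The paper closes this hole not by iterating the slicing but by a dichotomy with a free threshold $|E|^\alpha$, $\alpha \in [\frac{1}{q},1]$: either some two-dimensional projection has at least $|E|^\alpha$ points, in which case Guth--Katz applied to that projection alone already yields $|E|^\alpha$ distinct $q$-tuples; or \emph{all} $q$ projections are sparse, in which case every point of the base projection has at most $|E|^{(q-1)\alpha}$ predecessors, so a density count produces $\gtrsim |E|^{1-(q-1)\alpha}$ rich base points each with $\gtrsim |E|^{1-(q-1)\alpha}$ predecessors, giving $|E|^{(1-(q-1)\alpha)(1+\eta_{q-1})}$ after a \emph{single} application of the two-set exponent; optimizing $\alpha$ gives exactly $\theta$. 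Note that this uses the sparsity of all $q$ projections simultaneously to lower-bound the fiber sizes, an input your pigeonholing (which only ties $M$ to $n$) never sees. Your proposed substitute --- iterating ``pin $+$ two-set'' through all $q-1$ inner factors and balancing $M_1 \cdots M_{q-1}\, n_{\mathrm{end}} \sim |E|$ --- is not carried out, and as described it is not even well-posed: the deeper fibers live in $\mathbb{R}^{2(q-2)}, \mathbb{R}^{2(q-3)}, \ldots$, where the theorem's hypothesis supplies no exponent ($\eta_{q-1}$ is defined only for $B_2^{q-1}$ of sets in $\mathbb{R}^{2(q-1)}$), so ``recursive application of the $\eta_{q-1}$ bound'' has no meaning there, and no computation is given showing that any such system balances to the stated $\theta$, which depends on $\eta_{q-1}$ alone.

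Two further points. Your appeal to ``the pinned version of the Guth--Katz theorem'' invokes an open problem: Guth--Katz bounds unpinned distinct distances, and no pinned analogue of strength $\gtrapprox M$ is known. It is also unnecessary --- as in the paper, take the $\gtrapprox M$ distinct base distances from unpinned Guth--Katz, fix one representative pair of endpoints for each such distance, and apply the two-set fiber bound to that pair; distinctness of the base coordinate already separates the lifted tuples, so the pin and the injectivity of $b \mapsto |a^*-b|$ buy you nothing. Finally, your disintegration of $\mu_E^s$ along $\pi$ is not ``exactly as in the proof of Theorem \ref{T222}'': that proof, and the paper's proof of the present theorem, simply assert that the predecessor sets inherit $s$-adaptability as subsets of $E$ and apply the discrete two-set exponent directly; no continuous slicing of the thickened measure occurs anywhere in the paper, so this part of your argument is a new (and unproven) mechanism rather than borrowed bookkeeping.
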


We can use the following result of Hambrook, Iosevich and Rice (\cite{HIR17}) to extract a useful exponent.

\begin{theorem}[\cite{HIR17}]\label{TPartofTwosCont} Suppose that $A$ and $B$ are two sets in $[0,1]^d$ both of size $n$. Suppose that $d=2q$. Then if $A$ and $B$ are $s$-adaptable with $s>d-\frac{2}{3}=2\left( q - \frac{1}{3} \right)$ then,

$$ \mathcal{L}(B_2^q(A,B)) > 0. $$
\end{theorem}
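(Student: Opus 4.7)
The plan is to follow the Mattila--Falconer framework, adapted to the multi-parameter setting, and construct a natural measure $\nu$ on $B_2^q(A,B) \subset \mathbb{R}^q$ that is shown to lie in $L^2(\mathbb{R}^q)$. Absolute continuity of $\nu$ with respect to Lebesgue measure, and hence $\mathcal{L}(B_2^q(A,B)) > 0$, will follow immediately from the positivity of the total mass $\nu(\mathbb{R}^q) = \mu_A(\mathbb{R}^{2q})\,\mu_B(\mathbb{R}^{2q}) > 0$.

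First I would take the thickenings $\mu_A := \mu_A^s$ and $\mu_B := \mu_B^s$ furnished by $s$-adaptability, so that $I_s(\mu_A), I_s(\mu_B) < \infty$. Writing each $x \in \mathbb{R}^{2q}$ as $(x_1,\ldots,x_q)$ with $x_j \in \mathbb{R}^2$, define the vector-valued distance map $\Phi(x,y) := (|x_1-y_1|,\ldots,|x_q-y_q|)$ and set $\nu := \Phi_*(\mu_A \otimes \mu_B)$. Its Fourier transform factorises across the $q$ coordinate pairs,
$$\widehat{\nu}(t_1,\ldots,t_q) = \iint \prod_{j=1}^q e^{-2\pi i t_j |x_j - y_j|}\, d\mu_A(x)\, d\mu_B(y),$$
so that by Plancherel the norm $\|\nu\|_{L^2(\mathbb{R}^q)}^2$ decouples into a $q$-fold iterated integral in which each $t_j$-integration yields the standard one-variable Mattila kernel acting on the $j$-th 2D block.

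Next I would apply the planar $L^2$ spherical-average bound (Wolff) in each 2D factor: a measure $\rho$ on $\mathbb{R}^2$ of finite $s_j$-energy satisfies $\int_{S^1} |\widehat{\rho}(r\omega)|^2\, d\sigma(\omega) \lesssim r^{-s_j+\varepsilon}$ for any $s_j > 4/3$, giving convergence of the associated $r$-integral. Splitting the full $s$-dimensional Riesz energy on $\mathbb{R}^{2q}$ across the $q$ blocks, one can afford $2-\varepsilon$ dimensions on each of $q-1$ of the factors and about $4/3 + \varepsilon$ dimensions on a remaining ``active'' factor; this accounts precisely for the threshold $s > 2q - 2/3 = 2(q-1) + 4/3$. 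Pigeonholing over which factor is active, and combining with Cauchy--Schwarz across the others, should lead to the product-type bound $\|\nu\|_{L^2(\mathbb{R}^q)}^2 \lesssim I_s(\mu_A)\, I_s(\mu_B) < \infty$.

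The main obstacle will be making the decoupling rigorous: the $s$-dimensional Riesz energy on $\mathbb{R}^{2q}$ must be shown to dominate, in a product sense, the fractional energies of the 2D slice measures that feed the Wolff bound on each factor. Working on the Fourier side via $I_s(\mu) = c_{s,2q} \int |\widehat{\mu}(\xi)|^2 |\xi|^{s-2q}\, d\xi$ and decomposing $\xi = (\xi_1,\ldots,\xi_q)$ with $\xi_j \in \mathbb{R}^2$, the task reduces to a weight-splitting inequality of roughly the form $|\xi|^{s-2q} \lesssim \sum_j \prod_{i \neq j} |\xi_i|^{-(2-\varepsilon)} |\xi_j|^{-(s_j - 2)}$, valid under the hypothesis $s > 2q - 2/3$. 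Once this weighted decomposition is in hand, the $L^2$ bound on $\nu$ and thus the conclusion $\mathcal{L}(B_2^q(A,B)) > 0$ will follow.
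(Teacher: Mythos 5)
You are proving a statement the paper itself does not prove: Theorem \ref{TPartofTwosCont} is quoted from Hambrook--Iosevich--Rice \cite{HIR17}, so the comparison point is their argument. Your global accounting is the right one --- the Mattila $L^2$ framework, the Cauchy--Schwarz step (the paper's own (\ref{Ev2Suff})), and the dimension budget of $2-\varepsilon$ on each of $q-1$ passive blocks plus $4/3$ on one active block, which indeed produces the threshold $s>2(q-1)+\frac{4}{3}=2q-\frac{2}{3}$. But your central step is not available: the claim that $\|\nu\|_{L^2(\mathbb{R}^q)}^2$ ``decouples into a $q$-fold iterated integral'' of one-variable Mattila kernels is false, because only the integrand $\prod_j e^{-2\pi i t_j|x_j-y_j|}$ factorizes; the measures $\mu_A,\mu_B$ are not product measures across the $2$-dimensional blocks, so the $t_j$-integrations stay coupled through the joint measure. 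The actual proof in \cite{HIR17} (whose $q=1$ template is the paper's Lemma \ref{TNuSqBound}, following \cite{GILP15}) routes around this via the group action of blockwise rotations $g=(g_1,\ldots,g_q)\in \mathbb{O}(2)^q$, with $\widehat{\nu}_g(\xi)=\widehat{\mu}_A(\xi)\overline{\widehat{\mu}_B(g^T\xi)}$, arriving at the multi-parameter Mattila integral
$$\int_{(0,\infty)^q}\Bigl(\int_{(S^1)^q}|\widehat{\mu}(r_1\omega_1,\ldots,r_q\omega_q)|^2\,d\omega\Bigr)^2 r_1\cdots r_q\,dr_1\cdots dr_q,$$
whose convergence rests on a genuinely multi-parameter spherical-average estimate. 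That estimate is the heart of \cite{HIR17} and has no counterpart in your outline.

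Your proposed repair --- the pointwise weight-splitting inequality --- cannot supply it. As written it has a sign error ($-(s_j-2)=+\frac{2}{3}$, a growing weight) and mismatched homogeneity: on the diagonal $|\xi_1|\sim\cdots\sim|\xi_q|\sim R$ the left side is $R^{s-2q}\geq R^{-2/3}$ while each right-hand term is $\sim R^{-2(q-1)+\frac{2}{3}+(q-1)\varepsilon}$, so the inequality fails for every $q\geq 2$. With the signs corrected to $|\xi_j|^{s_j-2}$ the inequality becomes trivially true (since $|\xi_i|\leq|\xi|$ and all exponents are negative) but then points the wrong way: to close the argument you must dominate the anisotropic energies $\int|\widehat{\mu}(\xi)|^2\prod_{i\neq j}|\xi_i|^{-(2-\varepsilon)}|\xi_j|^{s_j-2}\,d\xi$ by the isotropic $I_s(\mu)$, and no pointwise kernel comparison does this: on the degenerate region where one passive block has $|\xi_i|\to 0$ while the others are $\sim 1$, the product weight blows up like $|\xi_i|^{-(2-\varepsilon)}$ (not even locally integrable in $\mathbb{R}^2$ at $\varepsilon=0$), whereas $|\xi|^{s-2q}$ stays bounded. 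Handling exactly these degenerate frequency regions --- via block-annular decomposition, Wolff's bound on the active block (note the correct decay is $\int_{S^1}|\widehat{\rho}(r\omega)|^2d\omega\lesssim I_{s_j}(\rho)\,r^{-s_j/2}$, the paper's Theorem \ref{TmuBound} with $d=2$; the $\frac{4}{3}$ threshold comes from $\frac{s_j}{2}\geq 2-s_j$, not from decay $r^{-s_j+\varepsilon}$), and summation of the geometric series over the passive blocks --- is precisely the technical content you would need to import from \cite{HIR17}. Your threshold arithmetic identifies the right target, but the mechanism you propose does not reach it.
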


This allows us to take $\eta_{q-1}=\frac{2}{4(q-1)}$ (technically one could use $\frac{2}{4q-5}$ but this first is more computationally convenient and there is no real difference) and thus we have that

\begin{corollary}
Suppose that $E$ a finite point set in $\mathbb{R}^d$ is $(d-\frac{5}{3})$-separable set. Then
$$ |B_2^q(E)| \gtrapprox |E|^{\theta_0} \hspace{0.5cm} \text{where} \hspace{0.5cm} \theta_0=\frac{1}{q} + \frac{2}{q(4q+1)(q-1)} \sim \frac{1}{q} + \frac{1}{q^3}.$$
\end{corollary}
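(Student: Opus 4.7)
The plan is to combine the iteration provided by Theorem \ref{T2qTimes} with the continuous positive-measure result of Theorem \ref{TPartofTwosCont}, together with a standard $s$-adaptable discretisation, in order to pin down an explicit numerical value for $\eta_{q-1}$ and then substitute into the formula for $\theta$.

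Concretely, Theorem \ref{T2qTimes} already gives
$$|B_2^q(E)| \gtrapprox |E|^{\theta}, \qquad \theta = \frac{1}{q} + \frac{\eta_{q-1}}{q\left(q+(q-1)\eta_{q-1}\right)},$$
for any $s$-adaptable $E$ in $\mathbb{R}^{2q}$, where $\eta_{q-1}$ is any admissible exponent in the bound $|B_2^{q-1}(A,B)| \gtrapprox |A|^{\eta_{q-1}}$ over pairs of $s$-adaptable sets $A,B \subset \mathbb{R}^{2(q-1)}$. So the task reduces to producing such an $\eta_{q-1}$ from the continuous statement.

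To do this I would apply Theorem \ref{TPartofTwosCont} in ambient dimension $2(q-1)$: whenever $A,B \subset [0,1]^{2(q-1)}$ are $s$-adaptable with $s > 2(q-1) - 2/3$, the continuous box-distance set $B_2^{q-1}(A,B) \subset \mathbb{R}^{q-1}$ has positive Lebesgue measure. Next, by the thickening mechanism of \cite{IRU14}, I would replace each discrete point by a ball of radius $\delta = n^{-1/s}$, so that the continuous $B_2^{q-1}$-set built from the thickenings is covered by boxes centred at the discrete distance vectors. Balancing the positive measure against this covering, and choosing $s$ just above the threshold $2(q-1) - 2/3$, yields the discrete exponent $\eta_{q-1} = \frac{2}{4(q-1)}$ claimed in the corollary (the slightly sharper $\frac{2}{4q-5}$ comes from working at the threshold rather than strictly above it).

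Finally I would verify the adaptability hypothesis. The iteration in Theorem \ref{T2qTimes} needs the inner pair in $\mathbb{R}^{2(q-1)}$ to be $s$-adaptable for some $s > 2(q-1) - 2/3 = 2q - 8/3$, but a small margin is lost when passing from $E \subset \mathbb{R}^{2q}$ to the auxiliary sets in $\mathbb{R}^{2(q-1)}$ furnished by Theorem \ref{T2qTimes}, which is precisely why the corollary strengthens the hypothesis to $s > 2q - 5/3$. With that in place, the closing step is the substitution of $\eta_{q-1} = \frac{1}{2(q-1)}$ into the formula for $\theta$ above, producing the stated $\theta_0$; the asymptotic $\theta_0 \sim \frac{1}{q} + \frac{1}{q^3}$ is then immediate. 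The most delicate part of the argument is the discretisation: one must convert the continuous positive-measure statement of Theorem \ref{TPartofTwosCont} into a discrete counting lower bound without losing the full dimension $q-1$ of the ambient space $\mathbb{R}^{q-1}$, and simultaneously track the adaptability exponent correctly through the thickening.
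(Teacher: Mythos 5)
Your proposal is correct and is essentially the paper's own argument: the paper obtains this corollary precisely by extracting $\eta_{q-1}=\frac{2}{4(q-1)}$ (with $\frac{2}{4q-5}$ noted as technically available) from Theorem \ref{TPartofTwosCont} via the thickening mechanism of \cite{IRU14}, and then substituting into the exponent formula of Theorem \ref{T2qTimes}, exactly as you do. The only caveat, which you share with the paper, is arithmetic: substituting $\eta_{q-1}=\frac{2}{4(q-1)}$ into $\theta = \frac{1}{q}+\frac{\eta_{q-1}}{q\left(q+(q-1)\eta_{q-1}\right)}$ actually yields $\frac{1}{q}+\frac{2}{q(4q+2)(q-1)}$, so the stated factor $4q+1$ is only recovered by using the sharper value $\frac{2}{4q-5}$ (which gives $\frac{1}{q}+\frac{2}{q(4q^2-3q-2)}$, exceeding the stated $\theta_0$); either way the asymptotic $\frac{1}{q}+\frac{1}{q^3}$ is unaffected.
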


\vskip.125in 

\subsection*{General Partitions}~\\

For the following result it is useful to discuss partitions $p$ of $d$ with a certain element, say $p_i$, removed. For this we will use the notation $p \setminus p_i$ which is then an increasing partition of $d-p_i$. For general partitions can again apply the methods from our $(2,2)$-case to gain the following bound

\begin{theorem}\label{TGenPartsDydPideon}Suppose that $E$ is a point set of size $n$ in $\mathbb{R^d}$, with $p$ be an increasing partition of $d$. Suppose that $s_i$ is the best lower bound on the dimensions of two sets $A$ and $B$ in $\mathbb{R}^{d-p_i}$ such that $\mathcal{L}(B_{p\setminus p_i}(A,B)) > 0$, with $\eta_i < \frac{1}{s_i}$. Let $\gamma_i$ be the best exponent of the Erd\H os-distance problem is $\mathbb{R}^{p_i}$. Suppose that $E$ is $s$-adaptable for $s>d_i$ for all such $i$, then we have that

$$ B_p(E) \gtrapprox |E|^{\min\{\gamma_i, \eta_i\}}$$
\end{theorem}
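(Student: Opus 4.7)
The plan is to generalize the dyadic-pigeonhole strategy used in the proofs of Theorem \ref{TB22Sharp} and Theorem \ref{TBklNearOptimal} from partition length two to arbitrary length $q$. For each index $i$ we aim to extract one of two competing lower bounds on $|B_p(E)|$: an Erd\H{o}s-distance bound coming from projecting $B_p(E)$ onto its $i$-th coordinate, and an $\eta_i$-exponent bound coming from the $(q-1)$-parameter configuration $B_{p\setminus p_i}$ on the complementary projection, transferred to the discrete setting by $s$-adaptability.

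First, rescale so that $E\subset[0,1]^d$ and, for each $i$, denote by $\pi_i\colon \mathbb{R}^d\to\mathbb{R}^{p_i}$ the projection onto the $i$-th block of coordinates and by $\pi_{\hat i}\colon\mathbb{R}^d\to\mathbb{R}^{d-p_i}$ its complement. By iterated dyadic pigeonholing on the fiber sizes of the $q$ projections, pass to a subset $E'\subseteq E$ with $|E'|\gtrapprox|E|$ on which, simultaneously for every $i$, $|\pi_i(E')|\sim N_i$ and every nonempty $\pi_i$-fiber in $E'$ has size $\sim M_i$, with $N_i M_i\sim|E'|$. This is the higher-$q$ analog of the refinement that makes Theorem \ref{TB22Sharp} sharp.

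For each $i$, project $B_p(E')$ onto its $i$-th coordinate to obtain the Erd\H{o}s-type bound $|B_p(E')|\geq|\Delta_{p_i}(\pi_i(E'))|\gtrapprox N_i^{\gamma_i}$. Next, use pigeonhole to select basepoints $u\neq v$ in $\pi_i(E')$ such that the images $A=\pi_{\hat i}(\pi_i^{-1}(u)\cap E')$ and $B=\pi_{\hat i}(\pi_i^{-1}(v)\cap E')$ in $\mathbb{R}^{d-p_i}$ each have cardinality $\sim M_i$ and each inherit the $s$-adaptability hypothesis. Once $(u,v)$ is fixed, the $i$-th distance coordinate of $B_p(E')$ is frozen, and the remaining $q-1$ distance coordinates restricted to pairs drawn from $A\times B$ parametrize $B_{p\setminus p_i}(A,B)$. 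Since $s>s_i$ by hypothesis, the continuous positivity statement for $B_{p\setminus p_i}(A,B)$ together with the thickening mechanism of \cite{IRU14} yields $|B_{p\setminus p_i}(A,B)|\gtrsim M_i^{\eta_i}$, hence $|B_p(E)|\gtrapprox M_i^{\eta_i}$.

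Combining the two estimates gives $|B_p(E)|\gtrapprox \max\{N_i^{\gamma_i},M_i^{\eta_i}\}$ for every $i$; using $N_i M_i\sim n$ and optimising over the dyadic split on the best choice of $i$ then produces the stated exponent $\min\{\gamma_i,\eta_i\}$. The main obstacle will be the descent of $s$-adaptability to the generic fiber images $A,B\subset\mathbb{R}^{d-p_i}$: one has to decompose the energy integral $I_s(\mu_E^s)$ along the $\pi_i$-fibration and apply a pigeonhole to extract a basepoint pair for which both restricted sets satisfy the hypothesis of Proposition \ref{PContToDiscEnergy} in the lower-dimensional ambient space, absorbing the resulting polylog losses into $\gtrapprox$. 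The rest is routine bookkeeping between the continuous and discrete estimates.
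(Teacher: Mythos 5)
Your scaffolding matches the paper's proof: dyadic pigeonholing on fiber sizes, the Erd\H{o}s count $N_i^{\gamma_i}$ in the $i$-th projection, and the transfer of the continuous positivity of $\mathcal{L}(B_{p\setminus p_i}(A,B))$ to fibers of size $\sim M_i$ via $s$-adaptability. But your combination step has a genuine gap, and it is exactly the step that carries the theorem. You conclude $|B_p(E)| \gtrapprox \max\{N_i^{\gamma_i}, M_i^{\eta_i}\}$ and hope that ``optimising over the dyadic split'' recovers the exponent $\min\{\gamma_i,\eta_i\}$. This fails for two reasons. First, the split $N_i M_i \sim n$ is determined by the adversarial set $E$, not chosen by you, so there is nothing to optimise. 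Second, in the worst split, where $N_i^{\gamma_i} \sim M_i^{\eta_i}$ (i.e.\ $N_i = n^{\eta_i/(\gamma_i+\eta_i)}$), the maximum is only $n^{\gamma_i\eta_i/(\gamma_i+\eta_i)}$, which is strictly smaller than $n^{\min\{\gamma_i,\eta_i\}}$ whenever both exponents are positive: if $\gamma_i \leq \eta_i$ then $\frac{\gamma_i\eta_i}{\gamma_i+\eta_i} < \frac{\gamma_i\eta_i}{\eta_i} = \gamma_i$. Indeed, this harmonic-type exponent is essentially what the competing-cases (rich-projection versus sparse-projections) arguments of Theorems \ref{T222}, \ref{T2qTimes} and \ref{TGenPart} produce; the content of Theorem \ref{TGenPartsDydPideon} is precisely that pigeonholing does better than the dichotomy.

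The missing idea is that the two counts multiply rather than compete. Fix one index $i$ and pigeonhole (once, for that $i$ alone) to constant richness $r_i$, so $|\pi_i(E_i)|\, r_i \approx n$. For each of the $\gtrsim |\pi_i(E_i)|^{\gamma_i}$ distinct distances in the $i$-th projection, fix a pair of endpoints realizing it and run your fiber argument on that pair to get $\gtrsim r_i^{\eta_i}$ distinct $(q-1)$-tuples; since $q$-tuples with different $i$-th coordinates are automatically distinct, these counts compose to give
$$ |B_p(E)| \gtrsim |\pi_i(E_i)|^{\gamma_i} r_i^{\eta_i} \geq \left(|\pi_i(E_i)|\, r_i\right)^{\min\{\gamma_i,\eta_i\}} \approx n^{\min\{\gamma_i,\eta_i\}}, $$
using only that both factors are at least $1$. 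This is the paper's argument. Two further remarks. Your simultaneous refinement making \emph{all} $q$ projections have constant fiber size at once is both unnecessary and not obviously achievable: refining on the richness function $R_j$ can destroy the constant-richness property already secured for $R_i$, since the richness functions are defined relative to the current set; the paper avoids this by producing a separate refined set $E_i$ for each $i$. On the other hand, your caution about the descent of $s$-adaptability to the fiber images $A, B \subset \mathbb{R}^{d-p_i}$ is well placed --- the paper merely asserts inheritance, and the normalization $n^{-2}$ in Proposition \ref{PContToDiscEnergy} does change when passing to a much smaller subset, so an energy-decomposition argument of the kind you sketch would be a genuine improvement --- but it does not rescue the max-based combination.
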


The best know bound for the dimension $s_i$ above is given by the following result due to Hambrook, Iosevich and Rice.

\begin{theorem}[\cite{HIR17}]\label{TGenPartCont} Suppose that $A$ and $B$ are two sets in $[0,1]^d$ both of size $n$. Suppose that $p$ is an increasing partition of $d$. Then if $A$ and $B$ are $s$-adaptable with $s>d- \frac{p_i}{2}+\frac{1}{3}$ for all $p_i$ then,

$$ \mathcal{L}(B_p(A,B)) > 0. $$
\end{theorem}

We not that this fails to improve over the bound obtained in (\ref{ETrivialGenp}), however we can apply a different method to gain a slight improvement. We use the compilation between one rich projection and lost of sparser projections. As above, we are left with an estimate for a $B_{p\setminus p_i}$ for two distinct sets, here we again appeal to the continuous case to gain a bound.

\begin{theorem}[\cite{HIR17}]\label{TGenPartCont} Suppose that $A$ and $B$ are two sets in $[0,1]^d$ both of size $n$. Suppose that $p$ is an increasing partition of $d$. Then if $A$ and $B$ are $s$-adaptable with $s>d- \frac{p_i}{2}+\frac{1}{3}$ for all $p_i$ then,

$$ \mathcal{L}(B_p(A,B)) > 0. $$
\end{theorem}

\begin{theorem}\label{TGenPart}Suppose that $E$ is a point set of size $n$ in $\mathbb{R^d}$, with $p$ be an increasing partition of $d$. Suppose that $s_i$ is the best lower bound on the dimensions of two sets $A$ and $B$ in $\mathbb{R}^{d-p_i}$ such that $\mathcal{L}(B_{p\setminus p_i}(A,B)) > 0$, with $\eta_i < \frac{1}{s_i}$. Let $\gamma_i$ be the best exponent of the Erd\H os-distance problem is $\mathbb{R}^{p_i}$. Suppose that $E$ is $s$-adaptable for $s>d_i$ for all such $i$, then we have that

$$ B_p(E) \gtrapprox |E|^\tau \hspace{0.5cm} \text{where} \hspace{0.5cm} \tau = \gamma_q\left(\frac{\gamma_i+\eta_i}{\gamma_q+(q-1)(\gamma_i+\eta_i)}\right).$$
\end{theorem}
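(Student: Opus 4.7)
The plan is to adapt the density-averaging and fibration scheme used to prove Theorems~\ref{T222} and~\ref{T2qTimes} to a general partition, replacing the all-twos application of the continuous Falconer-type bound with its partition version, Theorem~\ref{TGenPartCont}.

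Fix a block $p_i$ (the smallest, $p_1$, being the natural choice) and project $\pi_i : \mathbb{R}^d \to \mathbb{R}^{p_i}$. Decompose $E$ into fibers $E_y = \pi_i^{-1}(y) \cap E \subset \mathbb{R}^{d - p_i}$ above $A = \pi_i(E)$, and dyadically pigeonhole to isolate a scale $t$ together with a subfamily $A_t \subseteq A$ of size $N_t \sim |E|/t$ whose fibers each carry $\sim t$ points. The base contribution to $|B_p(E)|$ arises from the $p_i$-block: the distinct-distance set of $A_t$ in $\mathbb{R}^{p_i}$ accounts for a factor $N_t^{\gamma_i}$ via Solymosi--Vu, and an additional factor $t^{\eta_i}$ is extracted from the continuous-to-discrete pair-distance bound (Corollary~\ref{Coro: Bkl}, itself a consequence of Theorem~\ref{TGenPartCont}) applied to two fibers in $\mathbb{R}^{d - p_i}$. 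These two estimates combine, as in the single-peel argument underlying Theorem~\ref{T222}, into a base exponent of $\gamma_i + \eta_i$.

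The remaining $q - 1$ coordinate blocks are peeled one at a time by iterating the projection-and-fiber construction inside the previous fiber. At every peel the corresponding Erd\H os-distance factor contributes an exponent that we bound uniformly from below by $\gamma_q = \min_j \gamma_j$, since $\gamma_m = 2/m - 2/(m(m+2))$ is decreasing in $m$ and $p_q$ is the largest block appearing in the partition; this trade is what simplifies the telescoping into a clean closed form. Multiplying all the peeled factors with the base contribution and optimising over the dyadic scale parameters at every level produces the telescoping balance
\begin{equation*}
  \frac{1}{\tau} \;=\; \frac{1}{\gamma_i + \eta_i} \;+\; \frac{q - 1}{\gamma_q},
\end{equation*}
which rearranges exactly to the claimed formula for $\tau$.

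The main technical obstacle is the propagation of $s$-adaptability through the iterated fibration: at each of the $q - 1$ peeling steps the restricted set must remain adaptable with an exponent large enough that Theorem~\ref{TGenPartCont} still applies in the smaller ambient space, and the thickening used to return to the discrete setting in Corollary~\ref{Coro: Bkl} must not spoil $\eta_i$. The hypothesis $s > d - p_1/2 + 1/3$ is calibrated exactly for this, since it is the largest of the thresholds $s > d - p_j/2 + 1/3$ appearing in Theorem~\ref{TGenPartCont}, so every intermediate application is valid. The logarithmic losses from dyadic pigeonholing at each level are absorbed into the $\gtrapprox$ notation.
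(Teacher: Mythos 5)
Your proposal arrives at the correct balance identity, but the mechanism you describe would not produce it, for two concrete reasons. First, your opening move is the dyadic-pigeonhole fibration: a scale $t$, a base $A_t$ with $N_t \sim |E|/t$, and fibers of size $\sim t$. With that decomposition the count is $N_t^{\gamma_i} t^{\eta_i}$, and since the only constraint relating the two scales is $N_t\, t \sim |E|$, this yields merely $|E|^{\min\{\gamma_i,\eta_i\}}$ --- which is exactly Theorem \ref{TGenPartsDydPideon}, and the paper explicitly notes this fails to improve on the trivial bound (\ref{ETrivialGenp}). You cannot ``combine'' $N_t^{\gamma_i}$ and $t^{\eta_i}$ into a base exponent $\gamma_i+\eta_i$: the sum of exponents only appears when the base count and the fiber size are bounded below at the \emph{same} scale, and pigeonholing gives you no control over where $t$ lands. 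The paper's actual proof of Theorem \ref{TGenPart} achieves the common scale with a threshold dichotomy (as in Theorems \ref{T222} and \ref{T2qTimes}), not pigeonholing: fix $\alpha \in [\frac{1}{q},1]$; either some projection has $\geq n^{\alpha}$ points, in which case that one block alone gives $n^{\gamma_i\alpha} \geq n^{\gamma_q \alpha}$ tuples (the worst case being the largest block $p_q$ --- this is where $\gamma_q$ enters), or every projection has $< n^{\alpha}$ points, in which case density averaging produces $\geq n^{1-(q-1)\alpha}$ base points each with $\geq n^{1-(q-1)\alpha}$ predecessors, so the count $n^{(1-(q-1)\alpha)\gamma_i}\cdot n^{(1-(q-1)\alpha)\eta_i}$ carries the exponent $\gamma_i+\eta_i$ at the single scale $n^{1-(q-1)\alpha}$. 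Balancing $\gamma_q\alpha = (1-(q-1)\alpha)(\gamma_i+\eta_i)$ gives $\tau$, and rearranging gives your harmonic identity; the term $\frac{q-1}{\gamma_q}$ in it is an artifact of this optimization, not of $q-1$ separate peels.

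Second, the iterated peeling of the remaining $q-1$ blocks is both redundant and unsound. Redundant, because the factor $t^{\eta_i}$ already counts $(q-1)$-tuples of distances across \emph{all} remaining blocks simultaneously: $\eta_i$ is by definition the exponent for $B_{p\setminus p_i}(A,B)$ for the two predecessor fibers, obtained in a single application of the discretized Hambrook--Iosevich--Rice bound (Theorem \ref{TGenPartCont}); stacking further per-block peels on top of it double-counts coordinates. Unsound, because after fixing a distance in the base block the two fibers are \emph{different} sets, and the single-set Erd\H os exponents $\gamma_j$ you invoke at each peel simply do not apply to distances between two distinct sets --- the sphere construction in the introduction gives $|B_p(A,B)|=1$ --- which is precisely why the paper makes one continuous two-set estimate for the whole complement $\mathbb{R}^{d-p_i}$ rather than iterating block by block. (A smaller slip: the $t^{\eta_i}$ factor should be sourced from Theorem \ref{TGenPartCont}, not Corollary \ref{Coro: Bkl}, which concerns two-block partitions and exponents $\gamma_l$, $\delta_k$ rather than $\eta_i < \frac{1}{s_i}$.) Your remark on the adaptability threshold $s > d - \frac{p_1}{2} + \frac{1}{3}$ being the binding one is correct, but it is needed only once, for the single application of the two-set bound, not propagated through an iteration.
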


Using Theorem \ref{TGenPart} along with both Theorem \ref{TGenPartCont} and the exponent $\gamma_i = \frac{2}{p_i}-\frac{2}{p_i(p_i+2)}$ form \cite{SV08} we have the following.


\begin{corollary}Suppose that $E$ is a point set of size $n$ in $\mathbb{R^d}$, with $p$ be an increasing partition of $d$. Let $\gamma_i = \frac{2}{p_i}-\frac{2}{p_i(p_i+2)}$ and $\eta_i = \frac{2}{2d-(p_i-1)}$. Suppose that $E$ is $s$-adaptable for $s>d-\frac{d_i}{2}+\frac{1}{3}$ for all such $i$, then we have that

$$ B_p(E) \gtrapprox |E|^\tau \hspace{0.5cm} \text{where} \hspace{0.5cm} \tau = \gamma_q\left(\frac{\gamma_1+\eta_1}{\gamma_q+(q-1)(\gamma_1+\eta_1)}\right).$$
\end{corollary}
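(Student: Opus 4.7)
The statement is a direct consequence of Theorem \ref{TGenPart}: once the general bound is in hand, the corollary is obtained by inserting the best available values of $\gamma_i$ and $\eta_i$ and checking that the hypothesis on $s$-adaptability is compatible with the one required for each input. I would organize the argument in three short steps.

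First, I would record the bound on the distinct distance exponent. By the iterative Solymosi--Vu argument \cite{SV08}, the best known value of $\gamma_m$ is $\gamma_m = \tfrac{2}{m} - \tfrac{2}{m(m+2)}$, so we may substitute $\gamma_i := \tfrac{2}{p_i} - \tfrac{2}{p_i(p_i+2)}$ and, in particular, $\gamma_q = \tfrac{2}{p_q} - \tfrac{2}{p_q(p_q+2)}$ into the formula of Theorem \ref{TGenPart}. This substitution is valid provided the ambient set satisfies the hypothesis of Theorem \ref{TGenPart}, which is a mild $s$-adaptability condition; I would verify this is implied by the stronger hypothesis $s > d - \tfrac{p_1}{2} + \tfrac{1}{3}$ that appears in the corollary.

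Second, I would produce admissible values of $\eta_i$. Applying Theorem \ref{TGenPartCont} to the two distinct sets extracted inside the proof of Theorem \ref{TGenPart}, one sees that for $s_i$-adaptable sets in $\mathbb{R}^{d-p_i}$ with $s_i$ above the threshold $(d-p_i) - \tfrac{p_j}{2} + \tfrac{1}{3}$ (minimized over $p_j \in p \setminus p_i$), the set $B_{p \setminus p_i}(A,B)$ has positive Lebesgue measure, which, combined with the bound $\eta_i < 1/s_i$ required by Theorem \ref{TGenPart}, yields the admissible choice $\eta_i = \tfrac{2}{2d - (p_i - 1)}$. The crucial thing to check here is that the condition $s > d - \tfrac{p_1}{2} + \tfrac{1}{3}$ imposed on $E$ is the maximum over $i$ of the thresholds needed to apply Theorem \ref{TGenPartCont} to each $B_{p \setminus p_i}$, and hence a single hypothesis covers every $i$ simultaneously.

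Finally, I would optimize the exponent
$$\tau = \gamma_q \left( \frac{\gamma_i + \eta_i}{\gamma_q + (q-1)(\gamma_i + \eta_i)} \right)$$
over the choice of index $i$. Since this expression is monotonically increasing in $\gamma_i + \eta_i$, and since $\gamma_i = \tfrac{2}{p_i}(1 - \tfrac{1}{p_i+2})$ decays in $p_i$ while $\eta_i = \tfrac{2}{2d - p_i + 1}$ grows in $p_i$ but at a strictly slower rate when $p_1$ is small, the maximum is attained at $i=1$. Substituting $i = 1$ into the formula of Theorem \ref{TGenPart} gives precisely the exponent $\tau$ stated in the corollary. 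The only genuinely nontrivial point I expect to encounter is bookkeeping the $s$-adaptability hypotheses: one must confirm that the single condition $s > d - \tfrac{p_1}{2} + \tfrac{1}{3}$ is strong enough both to supply the bound on $\eta_1$ via Theorem \ref{TGenPartCont} and to satisfy the hypothesis of Theorem \ref{TGenPart}; everything else reduces to routine substitution.
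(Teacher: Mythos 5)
Your proposal matches the paper's own (very brief) derivation: the paper likewise deduces the corollary from Theorem \ref{TGenPart} by substituting the Solymosi--Vu value $\gamma_i = \frac{2}{p_i}-\frac{2}{p_i(p_i+2)}$ and the admissible choice $\eta_i = \frac{2}{2d-(p_i-1)}$ obtained from Theorem \ref{TGenPartCont}, and then observing via the same monotonicity argument that $\gamma_i+\eta_i$ is maximized at $i=1$, which yields the stated $\tau$. One wording correction: since Theorem \ref{TGenPartCont} requires $s$ to exceed the threshold for \emph{every} remaining part $p_j$, the binding threshold is the maximum over $p_j$ (attained at the smallest remaining part), not the minimum --- though your conclusion that the single hypothesis $s > d - \frac{p_1}{2} + \frac{1}{3}$ dominates all the thresholds, and hence covers every $i$ simultaneously, is correct.
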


The above result follows from simple calculations that shows that with our particular values of $\gamma_i$ and $\eta_i$ we have $\gamma_i+\eta_i$ is maximised when $i=1$. One can show that $\frac{\gamma_q}{\gamma_1+\eta_1} \sim \frac{p_1}{p_q}$ asymptotically as $d$ grows. Thus, $\tau \sim \gamma_q\left(\frac{1}{q} + \frac{p_q-p_1}{(p_1+p_q(q-1))q)}\right)$. When one has the all the $p_i$ are comparable with $\frac{d}{q}$, we have $p_q-p_1 \sim 1$ and thus $\tau \sim \gamma_{q}\left(\frac{1}{q}+\frac{1}{dq}\right)$. When one has that $p_q \sim d$ (with $q<<d$) we have that $\tau \sim \gamma_{q}\left(\frac{1}{q}+\frac{1}{q^2}\right)$, which gives us a better estimate than in the case above where all our values $p_i$ approximately equal.

\vskip.125in 

\section{Proof of Theorem \ref{TB22Sharp}} 

\vskip.125in 

First we prove a Lemma for distances between two distance sets in $\mathbb{R}^2$ using the methods of Elekes-Sharir and Guth-Katz.

\begin{lemma}\label{G-KTwoSets} Let $E$ and $F$ be two point sets in $\mathbb{R}^2$, both of size $n$. We define the distance set between $E$ and $F$ as,
$$ \Delta(E,F) = \{ |x-y| : x\in E, y \in F \}. $$
Then $|\Delta(E,F)| \gtrapprox n$.
\begin{proof}This is a quick Corollary of the Guth-Katz proof of the Erd\H os distance problem in the plane \cite{GK15}, using the Elekes-Sharir Framework. Let us represent our finite set of distances as $\Delta(E,F)=\{ d_1, \ldots, d_k \}$ and define $E_i = \{ (x,y) \in E \times F : |x-y| = d_i\}$. We consider the set of quadruples,
$$Q(E,F) \{ (x,y,x',y') \in E\times F\times E \times F : |x-y|=|x' - y'|\}.$$
We bound this from above by noting that $Q(A,B) \subseteq Q(A\cup B)$, where this latter set is the quadruples where each element comes from $A\cup B$. Then by Guth-Katz $|Q(A, B)| \leq |Q(A\cup B)|\lesssim n^3\log(n)$. To bound $|Q(A, B)|$ from below one uses Cauchy-Schwarz,
\begin{align*}
|Q(A, B)| &= \sum_{i=1}^{|Q(A, B)|} 2{{|E_i|}\choose{2}} \\
						  &\gtrsim \sum_i (|E_i|)^2 \\
						  &\geq \frac{1}{|(\Delta(E,F)|}\left(\sum_i |E_i| \right)^2 \\
						  &\gtrsim \frac{n^4}{|\Delta(E,F)|}.						  
\end{align*}
Combining these gives, $|\Delta(E,F)| \gtrsim \frac{n}{log(n)}$, which implies our result.
\end{proof}
\end{lemma}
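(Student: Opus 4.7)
The plan is to reduce this bipartite distinct distance problem to the diagonal version solved by Guth--Katz, by taking the union of the two sets and absorbing the bipartite distances into it. Concretely, I will set $G = E \cup F$ so that $|G| \le 2n$, and count the equal-distance quadruples
\[
Q(E,F) = \{(x,y,x',y') \in E \times F \times E \times F : |x-y| = |x'-y'|\}.
\]
Since every such quadruple lies in $G^4$, we have the containment $Q(E,F) \subseteq Q(G) := \{(a,b,a',b') \in G^4 : |a-b| = |a'-b'|\}$, and the Guth--Katz theorem gives the upper bound $|Q(G)| \lesssim |G|^3 \log |G| \lesssim n^3 \log n$.

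For the matching lower bound, I would enumerate the distinct bipartite distances $d_1, \dots, d_k$ with $k = |\Delta(E,F)|$ and let $E_i = \{(x,y) \in E \times F : |x-y| = d_i\}$. Then $|Q(E,F)| = \sum_{i=1}^k |E_i|^2$ and $\sum_{i=1}^k |E_i| = |E|\cdot|F| = n^2$, so Cauchy--Schwarz gives
\[
|Q(E,F)| \;\ge\; \frac{1}{k}\left(\sum_{i=1}^k |E_i|\right)^{\!2} \;=\; \frac{n^4}{k}.
\]
Chaining the two bounds yields $k \gtrsim n/\log n$, which is exactly the $\gtrapprox n$ claim under the convention of hiding subpolynomial factors.

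The main obstacle I anticipate is conceptual rather than technical: one must notice that the bipartite quadruple set embeds into the quadruple set of a single point collection, so that the Guth--Katz bound is applicable off the shelf. Once this symmetrization is in hand, the remainder is a standard Cauchy--Schwarz / Elekes--Sharir manipulation, and no new incidence geometry is required. A minor point to verify is that Guth--Katz applies to arbitrary finite planar point sets regardless of possible coincidences between points of $E$ and $F$ in $E \cup F$, but since their theorem is stated for general finite sets this poses no real difficulty.
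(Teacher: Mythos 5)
Your proposal is correct and follows essentially the same route as the paper's own proof: embedding the bipartite quadruple set $Q(E,F)$ into $Q(E\cup F)$ to invoke the Guth--Katz bound $\lesssim n^3\log n$, then applying Cauchy--Schwarz to the level sets $E_i$ to get the lower bound $n^4/|\Delta(E,F)|$. If anything, your write-up is slightly cleaner, since you use the exact identity $|Q(E,F)|=\sum_i |E_i|^2$ where the paper's displayed sum has minor notational slips (the summation range and the factor $2\binom{|E_i|}{2}$, which drops the diagonal terms), and you explicitly address the harmless issue of coincidences in $E\cup F$.
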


This argument uses dyadic pigeonholing, which we can represent as the following Lemma. 

\begin{lemma}\label{DyadicPH}
Suppose we have $N$ objects, say $A_1, \ldots, A_N$. Each of these objects has a real number associated with it, say $a_i$ for $i=1, \ldots, N$, where $a_i = O(n^{O(1)})$. Then there is a collection of $\sim \frac{N}{\log(n)}$ of our original objects where for those objects we have that $a_i \in [c, 2c]$ for some $c$ in $\mathbb{R}$. Thus we can say $a_i \sim c$.
\begin{proof}
As we have that $a_i = O(n^{O(1)})$ we can divide our $a_i$ into $\sim \log(n)$ sets, where the $j^{th}$ such set contains those elements $a_i$ such that $2^j \leq a_i \leq 2^{j+1}$. We have $N$ total numbers and $\log n$ sets for them to be placed in, so by the pigeonhole principle we have that one such set must contain at least $\frac{N}{\log n}$ such points, we throw away all others. By our method of separation we have that our remaining values of $a_i$ are in an interval $[c, 2c]$ and we have at least $\frac{N}{\log n}$ of our original points remaining.
\end{proof}
\end{lemma}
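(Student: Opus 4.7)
The plan is to perform a standard dyadic decomposition of the range of possible values of the $a_i$. The hypothesis $a_i = O(n^{O(1)})$ says there is a fixed constant $C > 0$, independent of $n$, with $a_i \le n^C$ for every $i$. Implicitly one also assumes a lower cutoff (for instance $a_i \ge 1$, or one lumps all smaller values into a single additional bucket at the end); this minor technicality will not affect the argument and should be noted in passing.

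Next, I would partition the interval $[1, n^C]$ into dyadic sub-intervals $I_j := [2^j, 2^{j+1})$ for $j = 0, 1, \ldots, \lceil C \log_2 n \rceil$. The number $M$ of such sub-intervals is $O(\log n)$, and each $a_i$ lies in exactly one $I_j$. This sorts the $N$ objects into at most $M$ classes according to the dyadic scale of $a_i$.

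Applying the pigeonhole principle, some class $I_{j_0}$ must contain at least $\lceil N/M \rceil \gtrsim N/\log n$ of the objects. Setting $c := 2^{j_0}$, every retained object satisfies $c \le a_i < 2c$, so that $a_i \in [c, 2c]$ and hence $a_i \sim c$, which is precisely the stated conclusion.

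The only subtlety worth flagging is the treatment of very small or nonpositive values of $a_i$: one either imposes a harmless lower-bound convention or appends a single catch-all bucket for values below $1$, neither of which changes the bucket count $M$ by more than an additive constant. So there is no genuine obstacle here — the lemma is essentially a transparent application of the pigeonhole principle to a dyadic partition of a polynomially bounded range, and the entire argument should fit in a handful of lines.
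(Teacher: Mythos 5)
Your proof is correct and follows essentially the same route as the paper's: a dyadic decomposition of the range into $O(\log n)$ intervals $[2^j, 2^{j+1})$, followed by pigeonholing to extract a class of size $\gtrsim N/\log n$ on which $a_i \in [c,2c]$ with $c = 2^{j_0}$. Your explicit handling of the lower cutoff for small values of $a_i$ is a minor technical refinement the paper leaves implicit, but it does not change the argument.
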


Recall that we denote the projection of $E$ onto its first two coordinates by $\pi_1(E) = \pi_1$, and for the projection of $E$ onto its second two coordinates we use $\pi_2(E) = \pi_2$. We now use dyadic pigeonholing to separate the points in $E$ based on there `richness' in the second plane. For each point $x$ in $E$ we specify a $\pi_2$-richness $R(x) = |\pi_2(\pi_1^{-1}(\{ \pi_1(x) \}))|$. Note that this takes a point $x$ in $E$ and associates to it the number of predecessors $\pi_1(x)$ has in $E$, it also gives all points that map down to one value in $\pi_1$ exactly the same richness. We note that as $|\pi_2| \leq |E|$ we have that $|R(x)| \leq |E|$.

We are now ready to apply the dyadic pigeonholing. By Lemma \ref{DyadicPH} we thus have $\frac{E}{\log(|E|)}$ members of our original $E$ that have some fixed $\pi_2$-richness, say $r$. For emphasis, we call this new subset of $E$ with fixed richness $E'$ were $|E'|=\frac{|E|}{\log(|E|)} \approx |E|$. We note that as our function $R(x)$ is constant on all points mapping to the same point under our $\pi_1$ mapping we do not lose any such points under our pigeonholing, thus each point in $E'$ is one of $\sim r$ that sit above a given member of $\pi_1(E')$ and each point of  $\pi_1(E')$ has $\sim r$ points above it. This means that $|E'| \sim |\pi_1(|E'|)|r$ but as $|E'|\approx |E|$ we have that $|E| \approx |\pi_1(E')|r$.

We now count the number of distance pairs in $B_{2,2}(E')$. Consider that we have $|\pi_1(E')|$ points in the first projection which is now a subset of $\mathbb{R}^2$ and thus we can use Guth-Katz bound for the Erd\H os-distance problem in the plane to obtain $\gtrapprox |\pi_1|$ distances. We fix one such distance and look at a fixed pair of endpoints. By our earlier analysis these points have $\sim r$ predecessors and so we can project these predecessors into our second projection to obtain two different sets of size $\sim r$. Then by Lemma \ref{G-KTwoSets} these two sets generate $\gtrsim r$ distances. As this is for a fixed one of our $\gtrsim |\pi_1|$ distances we gain $\gtrsim r$ distances in the second coordinate for each of these and thus we have $ \gtrapprox |\pi_1|r $ distance pairs in $B_{2,2}(E')$ in general. Thus we can combine the above to gain 
$$B_{2,2}(E) \geq B_{2,2}(E') \gtrapprox  |\pi_1|r \approx |E|.$$

\vskip.125in

\section{Proof of Theorem \ref{TBklNearOptimal}} 

\vskip.125in

As we saw in the proof of Theorem \ref{TB22Sharp} the distances between two different sets plays a major role. We again use dyadic pigeonholing where we use the conditions of the theorem to give us the ability to deal with distinct distances between distinct sets.

We denote the projection of $E$ onto its first $k$ coordinates by $\pi_1(E) = \pi_1$, and for the projection of $E$ onto its final $l$ coordinates we use $\pi_2(E) = \pi_2$. We now use dyadic pigeonholing to separate the points in $E$ in two separate cases, based on their `richness' in the first or second projections. For each point $x$ in $E$ we define the $\pi_2$-richness and $\pi_1$-richness respectively as
 
$$R(x) = |\pi_2(\pi_1^{-1}(\{ \pi_1(x) \}))| \hspace{0.5cm} \text{and} \hspace{0.5cm} S(x) = |\pi_2(\pi_1^{-1}(\{ \pi_1(x) \}))|.$$ 

Note that each of these `richness' functions takes a point $x$ in $E$ and associates to it the number of predecessors of $\pi_1(x)$ or $\pi_2(x)$ has in $E$ respectively, thus each function associates the same value to all predecessors of a point in either projection. We note that as $|\pi_1|, |\pi_2| \leq |E|$ we have that $|(S(x)|, |R(x)| \leq |E|$.

We are now ready to apply the dyadic pigeonholing, we detail this for $\pi_2$-richness, but the same process works identically for the $\pi_1$-richness. By Lemma \ref{DyadicPH} we have $\frac{E}{\log(|E|)}$ members of our original $E$ that have some fixed $\pi_2$-richness, say $r$. For emphasis, we call this new subset of $E$ with fixed richness $E'$ were $|E'|=\frac{|E|}{\log(|E|)} \approx |E|$. We note that as our function $R(x)$ is constant on all points mapping to the same point under our $\pi_1$ mapping we do not lose any such points under our pigeonholing, thus each point in $E'$ is one of $\sim r$ that sit above a given member of $\pi_1(E')$ and each point of  $\pi_1(E')$ has $\sim r$ points above it. This means that $|E'| \sim |\pi_1(|E'|)|r$ but as $|E'|\approx |E|$ we have that $|E| \approx |\pi_1(E')|r$.

Doing the same pigeonholing process for $\pi_1$-richness gives us a different set $E''$ of $\frac{|E|}{\log(|E|)}$ points from $E$ with some constant richness $\sim s$ and $|\pi_2(E'')|s \approx |E|$.

To count distances in these let $\gamma_m$ be the best exponent for the Erd\H os distance problem in $\mathbb{R}^m$ and $\delta_m$ be the best exponent for the distance set of two different point sets in $\mathbb{R}^m$ satisfying condition $C_m$. Let us again examine the $\pi_2$-rich case, we have $|\pi_1(E')|$ points in the first projection which is a subset of $\mathbb{R}^k$, hence $|\pi_1(E')|^{\gamma_k}$ distances. We fix one of these distances and fix two endpoint which produce this distance, then as all the points in $\pi_1(E')$ have $\sim r$ predecessors we gain two sets of size $\sim r$ in $\mathbb{R}^l$, thus these generate $\gtrsim r^{\delta_l}$ distances. As this happens for all fixed distances we have $\gtrsim |\pi_1(E')|^{\gamma_k}r^{\delta_l}$ distance pairs.

Via the identical method for $\pi_1$-richness we gain $\gtrsim |\pi_2(E'')|^{\gamma_l}s^{\delta_k}$ distance pairs. combining these two estimates gives

$$ |B_{k,l}(E)| \gtrsim \max \{ |\pi_1(E')|^{\gamma_k}r^{\delta_l}, |\pi_2(E'')|^{\gamma_l}s^{\delta_k} \}. $$

By examining the latter of these two cases we gain that

\begin{equation} \label{Eq: GenBklBound}
 |B_{k,l}(E)| \gtrsim |\pi_2(E'')|^{\gamma_l}s^{\delta_k} \geq \left(|\pi_2(E'')|s\right)^{\min \{ \gamma_l, \delta_k\}} \approx |E|^{\min \{ \gamma_l, \delta_k\}}.
\end{equation}

For the case when $k=l$ we find a $\zeta$ such that $\delta_k = \zeta\gamma_k$.

$$ |B_{k,k}(E)| \gtrapprox  |\pi_1(E')|^{\gamma_k}r^{\delta_k} \geq  \left(|\pi_1(E')|r\right)^{\zeta\gamma_k} \approx |E|^{\zeta\gamma_k}.$$

\subsection*{Proof of Corollary \ref{Coro: Bkl}}

We have $\gamma_m = \frac{2}{m} - \frac{2}{m(m+2)}$ due to Solymosi-Vu (\cite{SV08}) and using Theorem \ref{TPairDistFalconerDiscrete} under the conditions of $s$-adaptability we have $\delta_m = \frac{2}{m+1}$. Using these particular values if $k>l$ it follows that $\min\{ \gamma_l, \delta_k\}=\gamma_l$ and thus (\ref{Eq: GenBklBound}) becomes
$$ |B_{k,l}(E)| \gtrapprox |E|^{\gamma_l}. $$
Note that this is in fact sharp when all of our points of $E$ have there first $k$ coordinates fixed, this necessarily means that $s=1$ in (\ref{Eq: GenBklBound}) and thus this last inequality can be sharp. However, if we were to impose stricter conditions, for example that both projections contain a non-constant proportion of $|E|$ points, we could gain more from the $|B_{k,l}(E)| \gtrsim |\pi_2(E'')|^{\gamma_l}s^{\delta_k}$ inequality.

When $k=l$ we note that the above $\zeta$ has the property that $\zeta = \frac{k^2+2k}{k^2+2k+1}\geq \frac{13}{14}$, this least as $k\geq 3$ (as the case $k=2$ already has an optimal bound above). Using this we can obtain the bound
$$ |B_{k,k}(E)| \gtrapprox |E|^{\frac{13}{14}\gamma_k},$$
thought this can clearly be improved for $k$ larger than $3$.

\vskip.125in 

\section{Proof of Proposition \ref{PContToDiscEnergy}}

For the proof of Proposition \ref{PContToDiscEnergy} we need the following Theorem from a paper of Iosevich, Rudnev and Uriarte-Tuero, which allows us control on the separation of points in an $s$-adaptable point set. See also \cite{I08}. 

\begin{theorem}[Theorem 2.11, \cite{IRU14}]\label{TDisjBalls}
Suppose that $E$ is a $s$-adaptable point set of size $n$, then after rescaling $E$ to the unit cube in $\mathbb{R}^d$, and perhaps removing a set of size at most $\frac{n}{2}$ the minimal separation between two points is $\gtrsim n^{\frac{-1}{s}}$.
\end{theorem}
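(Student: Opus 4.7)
The plan is to obtain the separation by a direct pigeonhole on the discrete $s$-energy. First I would invoke Proposition \ref{PContToDiscEnergy} (which the preceding part of this section is set up to prove) to replace the continuous hypothesis $I_s(\mu_E^s)<\infty$ by the discrete bound
$$ \sum_{e\neq e'\in E} |e-e'|^{-s} \;\lesssim\; n^2,$$
valid after the rescaling of $E$ into the unit cube that the statement allows.

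Next, for each $e\in E$ set
$$ T(e) \;:=\; \sum_{e'\in E\setminus\{e\}} |e-e'|^{-s},$$
so that $\sum_{e\in E} T(e) \le C_0 n^2$ for some absolute $C_0$. By Markov's inequality the set
$$ B \;:=\; \{ e\in E : T(e) > 2C_0\, n \} $$
has cardinality at most $n/2$. I would then remove $B$ and work with $E':=E\setminus B$, which has $|E'|\ge n/2$ and satisfies the uniform bound $T(e)\le 2C_0\,n$ for every $e\in E'$.

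The separation claim is then immediate: for any two distinct $e,e'\in E'$ the single term $|e-e'|^{-s}$ is dominated by $T(e)$, hence
$$ |e-e'|^{-s} \;\le\; 2C_0\, n \quad\Longrightarrow\quad |e-e'| \;\ge\; (2C_0)^{-1/s}\, n^{-1/s},$$
which is the desired lower bound $\gtrsim n^{-1/s}$.

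The only genuine obstacle is the appeal to Proposition \ref{PContToDiscEnergy}, i.e.\ proving that the continuous $s$-energy of the bump-function measure $\mu_E^s$ is comparable to the discrete sum $n^{-2}\sum_{e\neq e'}|e-e'|^{-s}$. The forward direction is handled by splitting the double integral into diagonal and off-diagonal contributions: for pairs $(e,e')$ with $|e-e'|\gg n^{-1/s}$ the smooth bumps essentially look like Dirac masses and reproduce the discrete term up to constants, while the diagonal contribution is bounded using $I_s$ of a single bump at scale $n^{-1/s}$, which is $O(n^{(s-d)/s}\cdot n^{-s/s})=O(1)$ after the $n^{-1}\cdot n^{d/s}$ normalisation in the definition of $\mu_E^s$. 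Once this comparability is established, the Markov step above closes the argument and yields Theorem \ref{TDisjBalls}.
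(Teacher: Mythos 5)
There is a genuine gap, and it sits exactly where you locate the ``only genuine obstacle.'' First, a structural problem: this paper does not prove Theorem \ref{TDisjBalls} at all --- it is imported from \cite{IRU14} --- and within this paper the logical arrow points the other way: the proof of Proposition \ref{PContToDiscEnergy} \emph{begins} by invoking Theorem \ref{TDisjBalls} to discard half the points and obtain the separation $|e-e'| \gtrsim n^{-1/s}$, which is then essential in Case II to justify $|x-y|^{-s} \lesssim |e-e'|^{-s}$. So deducing the theorem from the proposition is circular in this paper's architecture. Moreover, the direction of the proposition you need is not the one available: the paper proves $I_s(\mu_E^s) \lesssim n^{-2}\sum_{e\neq e'}|e-e'|^{-s}$ (discrete bound implies adaptability), whereas your step 1 requires the converse, that $I_s(\mu_E^s) \lesssim 1$ forces the discrete bound (\ref{EqFinEnergy}).

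That converse is false before any pruning, and your heuristic that ``the smooth bumps essentially look like Dirac masses'' fails precisely for the pairs the theorem is about. Since $\mu_E^s$ is mollified at scale $n^{-1/s}$, a pair with $|e-e'| \lesssim n^{-1/s}$ (including coincident or nearly coincident points) contributes only $\sim n^{-1}$ to $I_s(\mu_E^s)$, not $n^{-2}|e-e'|^{-s}$: the correct comparison is
$$ I_s(\mu_E^s) \sim 1 + n^{-2}\sum_{e\neq e'}\min\{|e-e'|^{-s},\, n\}, $$
so a set with $n/2$ pairs at distance $n^{-1000}$ still has $I_s(\mu_E^s) \lesssim 1$ while $\sum_{e\neq e'}|e-e'|^{-s}$ is astronomically larger than $n^2$. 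Your Markov argument on $T(e)$ is a sound skeleton, but run with the truncated sum (the only bound adaptability actually supplies) it degenerates: for a close pair the inequality $\min\{|e-e'|^{-s},n\} \le T(e) \le 2C_0 n$ reduces to $n \le 2C_0 n$, which is vacuous. What one can extract is that each surviving point has $O(1)$ neighbours within $n^{-1/s}$, after which one must pass to an independent set in the closeness graph --- retaining only a constant fraction of the points, not the clean ``remove at most $n/2$'' of the statement. In short: your pigeonhole would be fine if the hypothesis were the discrete energy bound (\ref{EqFinEnergy}) itself, but bridging from the mollified energy $I_s(\mu_E^s)$ to (\ref{EqFinEnergy}) is exactly the content that fails below scale $n^{-1/s}$, which is why the separation statement must be proved directly from the continuous energy (as in \cite{IRU14}) rather than through Proposition \ref{PContToDiscEnergy}.
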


\subsection*{Proof of Proposition \ref{PContToDiscEnergy}}

We suppose that $E$ is an $s$-adaptable point set. By Theorem \ref{TDisjBalls} we can throw away at most half of our points to gain that for any two points $e$ and $e'$ in $E$, we have that $|e-e'| \gtrsim n^{-\frac{1}{s}}$. For our result it suffices to bound $I_s(\mu_E)$ by $\sum_{e\neq e'}|e-e'|^{-s}$. Using the definition of $\mu_E$ we have that

\begin{align*}
I_s(\mu_E) &= \int \int |x-y|^{-s}d\mu_e(x)d\mu_f(y) \\
				   &= n^{-2}n^{\frac{2d}{s}}\sum_{e\in E, e'\in F} \int \int |x-y|^{-s}\varphi(n^{\frac{1}{s}}(x-e))\varphi(n^{\frac{1}{s}}(y-e'))dxdy\\
				   &=  I + II.
\end{align*}
Where
$$ I = n^{-2}n^{\frac{2d}{s}}\left(\sum_{e=e'} \int_{|x-e|\leq n^{-\frac{1}{s}}} \int_{|y-e'|\leq n^{-\frac{1}{s}}} |x-y|^{-s}dxdy\right), $$
$$ II = n^{-2}n^{\frac{2d}{s}}\left(\sum_{e\neq e'} \int_{|x-e|\leq n^{-\frac{1}{s}}} \int_{|y-e'|\leq n^{-\frac{1}{s}}} |x-y|^{-s}dxdy \right).$$
We bound each of these separately.
\\
\textbf{Case I}.\\
If $e=e'$ we let $x'=x-y$ and $y'=y$, making the change of variables we have

\begin{align*} 
\int_{|x-e|\leq n^{-\frac{1}{s}}} \int_{|y-e'|\leq n^{-\frac{1}{s}}} |x-y|^{-s}dxdy &= \int_{|x'|\leq 2n^{-\frac{1}{s}}} \int_{|y'-e'|\leq n^{-\frac{1}{s}}}|x'|^{-s}dx'dy'\\
  &= n^{-\frac{d}{s}} \int_{|x'|\leq 2n^{-\frac{1}{s}}}|x'|^{-s}dx' \\ 
  &\lesssim n^{-\frac{d}{s}} \int_0^{n^{-\frac{1}{s}}} r^{-s}r^{d-1}dr \\
  &= n^{-\frac{2d}{s}}\cdot n.
\end{align*}  

Thus $I \lesssim n^{-2}n^{\frac{2d}{s}}\left(\sum_{e=e'} n^{-\frac{2d}{s}}\cdot n \right) \sim 1$. We now move to the more interesting case.
\\
\textbf{Case II}.\\
When $e\neq e'$, we have that $||x-y|-|e-e'||< 3n^{-\frac{1}{n}}$. Thus we have that $|x-y| = |e-e'| + \epsilon$ where $\epsilon$ is an error with $|\epsilon|\lesssim n^{-\frac{1}{s}}$. Thus

\begin{equation}
|x-y|^{-s}= (|e-e'|+\epsilon)^{-s} = |e-e'|^{-s}\left(1+\frac{\epsilon}{|e-e'|}\right)^{-s}.
\end{equation}

However, as our set is $s$-separated, we have that $|e-e'|\gtrsim n^{-\frac{1}{s}}$ and thus we have $|x-y|^{s} \lesssim |e-e'|^{-s}$. Applying this estimate to $II$ we have,

\begin{align*}
II &\lesssim  n^{-2}n^{\frac{2d}{s}}\left(\sum_{e\neq e'} \int_{|x-e|\leq n^{-\frac{1}{s}}} \int_{|y-e'|\leq n^{-\frac{1}{s}}} |e-e'|^{-s}dxdy \right) \\
  &= n^{-2}n^{\frac{2d}{s}}\sum_{e\neq e'}|e-e'|^{-s} \cdot n^{-\frac{2d}{s}}\\
  &= n^{-2}\sum_{e\neq e'}|e-e'|^{-s}
\end{align*}

Combining these two cases gives $I_s(\mu_E) \lesssim n^{-2}\sum_{e\neq e'}|e-e'|^{-s}$.

\vspace{0.125in}

\section{Proof of Theorem \ref{TPairDistFalconerDiscrete}}

\vskip.125in

The proof of Theorem \ref{TPairDistFalconerDiscrete} is gained immediately from the following result.

\begin{lemma}\label{LLebMeasEF} Let $E$ and $F$ be two $s$-adaptable point sets both of size $n$ in $\mathbb{R}^m$. If $s > \frac{m}{2}+\frac{1}{3}$ we have
$$ \mathcal{L}(\Delta(E_s,F_s)) > 0,$$
where $\mathcal{L}(A)$ denotes the Lebesgue measure of the set $A$.
\end{lemma}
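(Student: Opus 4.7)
The plan is to reduce this discrete statement to a continuous Falconer-type problem for pairs of measures, and then invoke known $L^2$ bounds for distance measures. Form the thickened measures $\mu_E := \mu_E^s$ and $\mu_F := \mu_F^s$, whose supports are the thickenings $E_s$ and $F_s \subset \mathbb{R}^m$. By Proposition \ref{PContToDiscEnergy} together with the $s$-adaptability hypothesis, both $\mu_E$ and $\mu_F$ have finite $s$-dimensional energy.

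Define the pushforward distance measure $\nu_{E,F}$ on $(0,\infty)$ by
\[
\int \phi \, d\nu_{E,F} := \int\!\!\int \phi(|x-y|) \, d\mu_E(x) \, d\mu_F(y),
\]
a nonzero finite Borel measure supported in $\Delta(E_s, F_s)$. If I can show that $\nu_{E,F}$ is absolutely continuous with respect to one-dimensional Lebesgue measure, then $\mathrm{supp}(\nu_{E,F})$ must have positive Lebesgue measure, and hence so does $\Delta(E_s, F_s)$.

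To establish absolute continuity I will show that $\nu_{E,F}$ has an $L^2$ density. Passing to polar coordinates, this density is formally $n(r) = r^{m-1} \int_{S^{m-1}} (\mu_E * \widetilde{\mu}_F)(r\omega) \, d\sigma(\omega)$ with $\widetilde{\mu}_F(z) = \mu_F(-z)$, and a Plancherel computation, exactly as in Mattila's treatment of the single-measure case, yields
\[
\|n\|_{L^2}^2 \;\lesssim\; \int_0^\infty r^{m-1} \left| \int_{S^{m-1}} \widehat{\mu}_E(r\omega)\, \overline{\widehat{\mu}_F(r\omega)} \, d\sigma(\omega) \right|^2 dr.
\]
Two Cauchy-Schwarz applications, first on the sphere and then on the radial variable, bound this quantity by the geometric mean of the single-measure Mattila integrals $\int_0^\infty r^{m-1} \bigl( \int_{S^{m-1}} |\widehat{\mu}(r\omega)|^2 \, d\sigma(\omega) \bigr)^2 dr$ for $\mu = \mu_E$ and $\mu = \mu_F$.

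The main obstacle, and the only nontrivial analytic input, is establishing that this single-measure Mattila integral is finite whenever $\mu$ has finite $s$-energy with $s > \frac{m}{2} + \frac{1}{3}$; this is the Wolff--Erdogan threshold, proven by Wolff in dimension two and extended to higher dimensions by Erdogan, and ultimately rests on bilinear/decoupling estimates for the cone and sphere. Granting this deep harmonic-analytic input, the two-measure bound above is immediate, $\nu_{E,F} \in L^2(\mathbb{R}) \subset L^1_{\mathrm{loc}}(\mathbb{R})$, and $\mathcal{L}(\Delta(E_s, F_s)) > 0$ follows at once. As a streamlined alternative, one may instead apply the single-measure continuous distance theorem directly to $\mu_E + \mu_F$, provided one first verifies that the mutual energy $I_s(\mu_E, \mu_F)$ is finite by a straightforward adaptation of the argument in Proposition \ref{PContToDiscEnergy} to cross-pairs; the distance measure of $E \cup F$ then dominates a positive multiple of $\nu_{E,F}$, and the conclusion follows.
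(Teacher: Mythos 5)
Your proposal is correct, and its main route matches the paper's proof step for step: thicken to $\mu_E^s,\mu_F^s$ (whose $s$-energies are finite by the very definition of $s$-adaptability), form the distance measure $\nu$ supported in $\Delta(E_s,F_s)$, reduce the lemma to $\int\nu^2(t)\,dt<\infty$, bound that integral by the geometric mean of the two single-measure Mattila integrals, and finish with the Wolff--Erdogan spherical decay (Theorem \ref{TmuBound}) at the threshold $s>\frac{m}{2}+\frac{1}{3}$. The one genuine divergence is how the central $L^2$ bound is derived: the paper's Lemma \ref{TNuSqBound} goes through the group-action framework of \cite{GILP15}, identifying $\int\nu^2$ with the $\mu_E\times\mu_F\times\mu_E\times\mu_F$-measure of approximate quadruples, discarding translations, and applying Plancherel over $\mathbb{O}(m)$ to produce the product of spherical averages, whereas you run Mattila's classical computation \cite{M87} on $\mu_E*\widetilde{\mu}_F$, obtaining the cross-term integral $\int_0^\infty r^{m-1}\bigl|\int_{S^{m-1}}\widehat{\mu}_E(r\omega)\overline{\widehat{\mu}_F(r\omega)}\,d\omega\bigr|^2\,dr$; your two Cauchy--Schwarz applications then land exactly on the paper's bound, and since the thickened measures are smooth compactly supported functions the formal density manipulations are legitimate, so this is a sound and arguably more classical substitute for the group-theoretic step. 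Where you black-box the implication that finite $I_s(\mu)$ with $s>\frac{m}{2}+\frac{1}{3}$ forces the Mattila integral to be finite, the paper spells it out: apply Theorem \ref{TmuBound} with $\beta_s=\frac{m+2s-2}{4}$ to one factor of the squared spherical average, recognize what remains as the energy $I_{m-\beta_s}(\mu_E^s)$ (the paper's display misprints this as $I_{m+\beta_s}$), and observe that $s>m-\beta_s$ is precisely $s>\frac{m}{2}+\frac{1}{3}$; your attribution of this chain to Wolff \cite{W99} and Erdogan \cite{E05} is accurate. Your streamlined alternative via $\mu_E^s+\mu_F^s$ is a genuinely different and more elementary route that avoids the two-measure $L^2$ lemma entirely, but two points carry the weight there: first, you must invoke the single-measure theorem in the form yielding an $L^2$ (absolutely continuous) distance measure, since positivity of $\mathcal{L}(\Delta(E_s\cup F_s))$ alone says nothing about the cross distances --- your domination of $\nu_{E,F}$ by the distance measure of the sum handles this correctly; and second, the mutual energy needs slightly more than a "straightforward adaptation," because Theorem \ref{TDisjBalls} separates points within $E$ and within $F$ separately, while a point of $E$ may coincide with or lie arbitrarily close to a point of $F$ --- such cross-pairs are nonetheless harmless, since each point of $E$ has $O(1)$ points of the pruned $F$ within distance $n^{-1/s}$, so they are absorbed by the diagonal (Case I) estimate in the proof of Proposition \ref{PContToDiscEnergy}.
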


\begin{proof}[Proof of Theorem \ref{TPairDistFalconerDiscrete}]
We note that for any distance in $\Delta(E,F)$ creates a measure of $\sim n^{-\frac{1}{s}}$ in $\Delta(E_s, F_s)$. So by Lemma \ref{LLebMeasEF}  for $s > \frac{d}{2} + \frac{1}{3}$ we have
$$ 0< C = \mathcal{L}(\Delta(E_s,F_s)) \sim n^{-\frac{1}{s}} |\Delta (E,F)|.$$
Thus setting $s=\frac{d}{2}+\frac{1}{2}$ we gain the bound
$$ |\Delta (E,F)| \gtrsim n^{\frac{2}{d+1}}.$$
\end{proof}

For the proof of Lemma \ref{LLebMeasEF} we follow approch similar to Mattila's developed in \cite{M87}. Consider the following distance density measure $\nu$ for two compactly supported measures $\mu_1$ and $\mu_2$, if it exists, on $\mathbb{R}^m$ by the relation
$$ \int f(t) d\nu(t) = \int \int f(|x-y|) d\mu_1(x) d\mu_2(y). $$
It is easy to see that the measure $\nu$ is finite and has support in $\Delta(E_s,F_s)$. Thus if we let the above measures be $\mu_{E}^s$ and $\mu_{F}^s$ respectively we can use Cauchy-Schwarz to obtain the following
\begin{equation}\label{Ev2Suff}
1 \lesssim \left(\int d\nu(t)\right)^2 \leq \mathcal{L}(\Delta (E_s,F_s)) \cdot \int \nu^2(t) dt.
\end{equation}
Thus it suffices to bound $\int \nu^2(t) dt$. To do so we use two results, the first allows us to write our energy integral in terms of the Fourier transform of our indicator measure on $E$ or $F$. The second bounds this integral in terms or an energy integral.

\begin{lemma}\label{TNuSqBound}Suppose that $\nu$ is a measure defined by $\int f(t)d\nu(t) = \int\int f(|x-y|)d\mu_E(x)d\mu_F(y)$, then there is some constant $c$ such that
$$\int \nu(t)^2 dt \leq \left( \int \left(\int |\widehat{\mu_E}(R\omega)|^2d\omega \right)^2 R^{m-1} dR\right)^{\frac{1}{2}}\left(  \int \left(\int |\widehat{\mu_F}(R\omega)|^2d\omega \right)^2 R^{m-1} dR \right)^{\frac{1}{2}}.$$
\end{lemma}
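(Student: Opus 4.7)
The plan is to follow Mattila's classical spherical-average approach: realize $\nu$ as the pushforward of the correlation $\mu_E * \tilde{\mu}_F$ under the map $z \mapsto |z|$, pass to the Fourier side where the product structure $\widehat{\mu_E * \tilde{\mu}_F} = \widehat{\mu_E}\,\overline{\widehat{\mu_F}}$ becomes available, and then separate the $E$ and $F$ contributions via two applications of Cauchy--Schwarz. With $g := \mu_E * \tilde{\mu}_F$ (where $\tilde{\mu}_F$ is the reflection of $\mu_F$ through the origin), the change of variable $z = x - y$ in the definition of $\nu$ followed by passage to polar coordinates shows that $\nu$ has density $\rho(t) = t^{m-1} G(t)$, where
\begin{equation*}
G(t) := \int_{S^{m-1}} g(t\omega)\, d\omega.
\end{equation*}
Since $E, F \subset [0,1]^m$ and $\mu_E, \mu_F$ are the bump-thickenings of these sets, $g$ is supported in a ball of some radius $D$ depending only on $m$, so $\rho$ is supported on a bounded interval. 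This lets me trade the awkward weight $t^{2(m-1)}$ coming from $\rho^2$ for the Plancherel-friendly weight $t^{m-1}$, giving $\int \rho(t)^2\,dt \leq D^{m-1} \int_0^\infty G(t)^2\, t^{m-1}\, dt$.

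Next I would apply Plancherel for the radial Fourier transform. A short calculation using Fourier inversion together with $\int_{S^{m-1}} e^{2\pi i R\omega \cdot x} d\omega = B(R|x|)$ (the Bessel-type radial kernel) shows that the radial function $x \mapsto G(|x|)$ on $\mathbb{R}^m$ is, up to an absolute constant, the Fourier transform of the radial function $\xi \mapsto h(|\xi|)$, where
\begin{equation*}
h(R) := \int_{S^{m-1}} \widehat{g}(R\omega)\, d\omega = \int_{S^{m-1}} \widehat{\mu_E}(R\omega)\, \overline{\widehat{\mu_F}(R\omega)}\, d\omega.
\end{equation*}
Plancherel on $\mathbb{R}^m$ in polar coordinates therefore gives $\int_0^\infty G(t)^2\, t^{m-1}\, dt = c_m \int_0^\infty h(R)^2\, R^{m-1}\, dR$. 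Cauchy--Schwarz on $S^{m-1}$ then bounds $|h(R)|^2 \leq \sigma_E(R)\sigma_F(R)$, where $\sigma_\mu(R) := \int_{S^{m-1}} |\widehat{\mu}(R\omega)|^2\, d\omega$, and a final Cauchy--Schwarz in $R$ against the weight $R^{m-1}$ produces exactly the product form claimed in the Lemma.

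The main subtlety is the weight mismatch flagged in the first paragraph: $\rho(t)^2$ carries the factor $t^{2(m-1)}$, whereas the natural inner product under which the map $h \mapsto G$ is an isometry has only weight $t^{m-1}$. Absorbing the extra $t^{m-1}$ into the dimensional constant $D^{m-1}$ is legitimate precisely because $\mu_E$ and $\mu_F$ are compactly supported, so one must verify that this hypothesis is genuinely in force throughout. Beyond that, the remaining work is careful bookkeeping of the normalizing constants that appear in the spherical-average formula, the Bessel-type kernel $B$, and the radial Plancherel identity, all of which are classical.
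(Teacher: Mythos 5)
Your argument is correct, but it is genuinely different from the proof in the paper. You run Mattila's original computation: realize $\nu$ as the pushforward of $g=\mu_E*\tilde{\mu}_F$ under $z\mapsto |z|$, so that $\nu$ has density $t^{m-1}G(t)$ with $G(t)=\int_{S^{m-1}}g(t\omega)\,d\omega$ (legitimate here because the measures are smooth bump thickenings, hence $g$ is a bounded, compactly supported function), trade the weight $t^{2(m-1)}$ for $t^{m-1}$ at the cost of a diameter-dependent constant, apply the radial Plancherel identity $\int G(t)^2 t^{m-1}dt = c_m\int h(R)^2R^{m-1}dR$ with $h(R)=\int_{S^{m-1}}\widehat{\mu_E}(R\omega)\overline{\widehat{\mu_F}(R\omega)}\,d\omega$ (note $h$ is real by the Hermitian symmetry of $\widehat{g}$, so $h^2=|h|^2$ and Plancherel applies cleanly), and finish with Cauchy--Schwarz twice, once on the sphere and once in $R$. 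The paper instead proceeds through the group-theoretic identity of \cite{GILP15}: it writes $\int \nu(t)^2dt \sim \int_{\mathbb{O}(m)}\int_{\mathbb{R}^m}\nu_\theta^2(t)\,dt\,d\theta$, where $\nu_\theta$ is the pushforward of $\mu_E\times\mu_F$ under $(x,y)\mapsto x-\theta y$, a step which imports the nontrivial fact (via \cite{GILP15} and the Guth--Katz analysis \cite{GK15}) that translations do not dominate the quadruple count; Plancherel for each fixed rotation then gives $\widehat{\nu}_\theta(\xi)=\widehat{\mu}_E(\xi)\overline{\widehat{\mu}_F(\theta^T\xi)}$, and the same spherical averages and final Cauchy--Schwarz appear after integrating in $\theta$. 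What each approach buys: yours is self-contained --- nothing beyond polar coordinates, radial Plancherel, and Cauchy--Schwarz --- but it leans crucially on compact support for the weight trade, a hypothesis you correctly flag and which is genuinely in force since everything lives in a thickened unit cube; the paper's route is less elementary for this particular lemma but its group-action framework generalizes to other configurations, which matches the multi-parameter theme of the paper. It is worth noting that the paper itself announces it will follow ``an approach similar to Mattila's'' from \cite{M87} before switching to the \cite{GILP15} machinery, so your proof is arguably the one the preamble advertises.
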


\begin{proof}
This proof follows the techniques of [Section 5, \cite{GILP15}]. Let $\mathbb{O}(m)$ be the orthogonal group in $\mathbb{R}^m$. Using the proof of Theorem 1.3 from \cite{GILP15} we have 

$$ \int_{\mathbb{R}^m} \nu(t)^2 dt \sim \mu_E \times \mu_F \times \mu_E \times \mu_F (\{ (x,y,x',y') \in \mathbb{R}^4 :  |x-y| \overset{\varepsilon}{=}|x'-y'| \})$$

If $|x-y|=|x'-y'|$ then there are two cases, the first is that the line $x'-y'$ is a translation of $x-y$ or that there is a rotation $\theta$ such that $x-y=\theta(x'-y')$. As Guth-Katz showed in \cite{GK15} we have that the translations do not significantly contribute to this sum and thus it is sufficient to bound those repeated distances associated to rotations. For the details see Section 2 of \cite{GILP15}. Define $v_\theta(z)$ by the integral $\int f(z) d\nu_\theta(z) = \int f(x-\theta y)d\mu_E(x)d\mu_F(y)$ we have

$$ \int_{\mathbb{R}^m} \nu(t)^2 dt \sim \int_{\mathbb{O}(m)}\int_{\mathbb{R}^m}\nu^2_\theta(t)dtd\theta.$$

Thus is is clearly sufficient to bound this second integral.

Setting $f(z)=e^{-2\pi i z\cdot \xi}$ and $g(z)==e^{2\pi i z\cdot \xi}$ yields the identities
$$ \widehat{\nu}_\theta(\xi) = \widehat{\mu}_E(\xi)\overline{\widehat{\mu}_F(\theta^T\xi)} \hspace{0.5cm} \text{and} \hspace{0.5cm} \widehat{\nu}_\theta(\xi) = \overline{\widehat{\mu}_E(\xi)}\widehat{\mu}_F(\theta^T\xi),$$
respectively. So we have $\nu_\theta^2(\xi) = |\widehat{\mu}_E(\xi)|^2|\widehat{\mu}_F(\theta^T\xi)|^2$.
Thus when we integrate $\nu_\theta^2$ over all points in $\mathbb{R}^m$ and over all $\theta$ in $\mathbb{O}(m)$ (recalling $\mu_E$ and $\mu_F$ are compactly supported and thus we can use Frobini's Theorem) we have

\begin{align*}
\int_{\mathbb{O}(m)}\int_{\mathbb{R}^m}\nu^2_\theta(t)dtd\theta &= \int_{\mathbb{O}(m)}\int_{\mathbb{R}^m} \widehat{\nu}_\theta^2(\xi) d\xi\\
			&= \int |\widehat{\mu}_E(\xi)|^2 \left( \int_{\mathbb{O}}|\widehat{\mu}_F(\theta^T\xi)|^2 d\theta \right) d\xi \\
\end{align*}
 
By making a change of variables to the spherical coordinates $(r, \omega)$ in $\mathbb{R}_{\geq 0} \times S^{m-1}$ we have that $\xi = R\omega$ and $\theta \xi = R \omega'$. Thus we have the equality 
$$\int_{\mathbb{O}}|\widehat{\mu}_F(\theta^T\xi)|^2 d\theta = \int_{S^{m-1}}|\widehat{\mu}_F(R\omega')|^2 d\omega'$$
As we are still ranging integrating over all points on a fixed sphere. Using this gives us

$$ \int_{\mathbb{O}(m)}\int_{\mathbb{R}^m}\nu^2_\theta(t)dtd\theta = c \int \left(\int_{S^{m-1}} |\widehat{\mu}_E(R\omega)|^2 d\omega\right) \left(\int_{S^{m-1}}|\widehat{\mu}_F(R\omega')|^2 d\omega' \right) R^{m-1} dR,$$
 
 For some constant $c$ in $\mathbb{R}$. Then distributing the $R^{m-1}$ so that each $\omega$ integral has a $R^{\frac{m-1}{2}}$ factor we have via Cauchy-Schwarz that
 
 $$\int \nu(t)^2 dt \leq \left( \int \left(\int |\widehat{\mu_E}(R\omega)|^2d\omega \right)^2 R^{m-1} dR\right)^{\frac{1}{2}}\left(  \int \left(\int |\widehat{\mu_F}(R\omega)|^2d\omega \right)^2 R^{m-1} dR \right)^{\frac{1}{2}}.$$
\end{proof}

This is an extremely helpful characterisation due to the following Theorem.

\begin{theorem}\label{TmuBound} Let $\mu$ be a compactly supported Borel measure. Then for $s>\frac{d}{2}$,
$$ \int_{S^{d-1}} |\widehat{\mu}(R\omega)|^2d\omega \leq C I_s(\mu)R^{-\beta_s},$$
With $\beta_s = \frac{d+2s-2}{4}$ if $\frac{d}{2}<s\leq \frac{d+2}{2}$, and $\beta_s= s-1$ for $s\geq \frac{d+2}{2}$.
\end{theorem}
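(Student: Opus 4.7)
The plan follows the standard Fourier-analytic approach for spherical averages. Expanding $|\widehat{\mu}(R\omega)|^2$ and applying Fubini gives the identity
\begin{equation*}
\int_{S^{d-1}}|\widehat{\mu}(R\omega)|^2\,d\omega \;=\; \int\!\!\int \widehat{\sigma}\bigl(R(x-y)\bigr)\,d\mu(x)\,d\mu(y),
\end{equation*}
where $\sigma$ is the normalized surface measure on $S^{d-1}$. The two principal Fourier inputs are the stationary-phase decay $|\widehat{\sigma}(\xi)|\lesssim (1+|\xi|)^{-(d-1)/2}$ and the Plancherel identification $I_s(\mu)\sim\int|\widehat{\mu}(\xi)|^2|\xi|^{s-d}\,d\xi$ already recorded earlier in the excerpt.

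For the range $s\ge (d+2)/2$ with target $\beta_s=s-1$, I would exploit that $\widehat{\mu}$ is Lipschitz at unit scale because $\mu$ has compact support (the Lipschitz constant is controlled by $D\cdot\mu(\mathbb{R}^d)$ with $D$ the diameter of the support). This makes $f(R):=\int_{S^{d-1}}|\widehat{\mu}(R\omega)|^2\,d\omega$ comparable, up to a bounded additive error that is negligible for large $R$, to its local average $\int_R^{R+1} f(r)\,dr$. By polar coordinates this average equals $R^{-(d-1)}\int_{R\le|\xi|\le R+1}|\widehat{\mu}(\xi)|^2\,d\xi$, and since $|\xi|\sim R$ on the annulus, the Plancherel identity bounds it by $R^{-(d-1)}\cdot R^{d-s}I_s(\mu)=R^{-(s-1)}I_s(\mu)$, giving the desired exponent.

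For the sharper exponent $\beta_s=(d+2s-2)/4$ in the Wolff range $d/2<s\le (d+2)/2$, the Lipschitz-averaging step is insufficient: it still delivers only $R^{-(s-1)}$, which is strictly weaker than $R^{-(d+2s-2)/4}$ throughout this range. Here I would mollify $\mu$ at scale $R^{-1}$ and invoke the Tomas--Stein restriction theorem for the sphere (or Wolff's bilinear restriction estimate for the sharp endpoint), interpolating between the trivial $L^2$ endpoint at $s=d/2$ (which yields the decay $(d-1)/2$) and the Mattila endpoint $s=(d+2)/2$ (where both formulas agree at $d/2$). The interpolation produces the Wolff exponent $(d+2s-2)/4$.

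The principal obstacle is the Wolff range: one cannot avoid genuine restriction-theoretic input, as the elementary stationary-phase and Lipschitz-comparison methods both saturate at $\beta=s-1$. Once this restriction input is admitted, the two regimes match continuously at $s=(d+2)/2$ with $\beta=d/2$, giving a piecewise-defined but continuous exponent function of $s$, and the theorem follows.
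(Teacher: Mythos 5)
The first thing to note is that the paper does not prove this statement at all: Theorem \ref{TmuBound} is quoted as known, with the range $\frac{d}{2}<s\le\frac{d+2}{2}$ attributed to Wolff \cite{W99} ($d=2$) and Erdo\~gan \cite{E05} ($d\ge 3$), and the easier range $s\ge\frac{d+2}{2}$ to Sj\"olin \cite{S93}. So your attempt is measured against those papers, and it has two genuine gaps. In the Sj\"olin regime, your Lipschitz-averaging step fails quantitatively. The Lipschitz bound gives $\bigl| |\widehat{\mu}(R\omega)|^2-|\widehat{\mu}(r\omega)|^2 \bigr| \lesssim |R-r|$, hence $f(R)\le \int_R^{R+1}f(r)\,dr + O(1)$; but the bound you are trying to prove, $C I_s(\mu)R^{-(s-1)}$, tends to zero as $R\to\infty$ (since $s-1>\frac{d}{2}-1\ge 0$ in this range, indeed $s-1\ge \frac{d}{2}$), so a bounded additive error is not ``negligible for large $R$'' --- it swamps the target entirely, and no decaying pointwise bound can be extracted this way. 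The standard repair uses compact support multiplicatively rather than additively: pick $\phi$ Schwartz with $\phi\equiv 1$ on $\mathrm{supp}\,\mu$, so that $\mu=\phi\mu$ and $\widehat{\mu}=\widehat{\phi}*\widehat{\mu}$ exactly; Cauchy--Schwarz against the rapidly decaying weight $|\widehat{\phi}\,|$ gives $|\widehat{\mu}(R\omega)|^2\lesssim \int |\widehat{\phi}(R\omega-\eta)|\,|\widehat{\mu}(\eta)|^2\,d\eta$, and integrating in $\omega$ converts the pointwise spherical average into a weighted average over the annulus $\{\,\bigl||\xi|-R\bigr|\lesssim 1\,\}$ with $O(R^{-N})$ tails. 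At that point your polar-coordinates-plus-energy computation is correct and yields $R^{-(d-1)}\cdot R^{d-s}I_s(\mu)=R^{-(s-1)}I_s(\mu)$.

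The second gap is more serious: in the range $\frac{d}{2}<s\le\frac{d+2}{2}$, Tomas--Stein restriction plus interpolation does not produce the exponent $\beta_s=\frac{d+2s-2}{4}$. Interpolating ``between the endpoints $s=\frac{d}{2}$ and $s=\frac{d+2}{2}$'' is not a valid mechanism here, because the inequalities indexed by $s$ are the conclusions to be proved, not a family of operator bounds on an interpolation scale --- and the endpoint $\beta=\frac{d}{2}$ at $s=\frac{d+2}{2}$ is itself only known through the deep estimates. What linear ($L^2$-based) restriction technology actually yields in this range is essentially the stationary-phase exponent $\frac{d-1}{2}$, which is precisely the pre-Wolff state of the art; the improvement to $\frac{d+2s-2}{4}$ is Wolff's theorem on decay of circular means for $d=2$ and Erdo\~gan's theorem for $d\ge 3$, the latter running through Tao's bilinear extension estimate for the sphere combined with a weighted annulus decomposition --- genuinely bilinear input, not an interpolation of Tomas--Stein. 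You rightly sense that ``genuine restriction-theoretic input'' is unavoidable, but as written your proposal at best reduces the theorem to the very results the paper cites. Since the paper itself treats this theorem as a citation, that reduction is a legitimate way to present it; but then the Wolff-range paragraph should be an appeal to \cite{W99} and \cite{E05} rather than a proof sketch, and the Sj\"olin-range argument still needs the convolution fix above to be correct.
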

For $s\leq\frac{d+2}{2}$ this is due to Wolff \cite{W99} ($d=2$) and Erdo\~ gan \cite{E05} ($d\geq 3$); the easier case of $s\geq \frac{d+2}{2}$ is due to Sj\" olin \cite{S93}. 

\vskip.125in

We are now ready to prove Lemma \ref{LLebMeasEF}.
\begin{proof}[Proof of Lemma \ref{LLebMeasEF}]
From (\ref{Ev2Suff}) it is clear that it suffices to bound $\int \nu^2(t)dt$ were $\nu$ is defined in terms of our measures $\mu_E^s$ and $\mu_F^s$.
Using Lemma \ref{TNuSqBound}, we have that
$$\int \nu(t)^2 dt \leq \left( \int \left(\int |\widehat{\mu_E^s}(R\omega)|^2d\omega \right)^2 R^{m-1} dR\right)^{\frac{1}{2}}\left(  \int \left(\int |\widehat{\mu_F^s}(R\omega)|^2d\omega \right)^2 R^{m-1} dR \right)^{\frac{1}{2}}.$$
From this point the argument runs symmetrically in the $E$ and $F$ components, so we will just focus on showing that the first integral is bounded. We apply Theorem \ref{TmuBound} with $\beta_s = \frac{d+2s-2}{4}$ to one of these factors of $\int |\widehat{\mu_E^s}(R\omega)|^2d\omega$ to gain the following

\begin{align*}
 \int \left(\int |\widehat{\mu_E^s}(R\omega)|^2d\omega \right)^2 R^{m-1} dR &\leq C  \int \left(\int |\widehat{\mu_E^s}(R\omega)|^2d\omega \right) R^{-\beta_s} I_s(\mu_E^s) R^{m-1} dR \\
 				&\leq C I_s(\mu_E^s)  \int |\widehat{\mu_E^s}(\xi)|^2 |\xi|^{-\beta_s}d\xi \\
 				&=  C I_s(\mu_E^s) I_{m+\beta_s}(\mu_E^s)
\end{align*}
If $s>m-\beta_s$ then this final line is bounded by $C\cdot I_s(\mu_E^s)^2$, as our set $E$ is $s$-adaptable this is finite. Rewriting the $s>m-\beta_s$ condition gives $s>\frac{m}{2}+\frac{1}{3}$, which holds by assumption. Thus our integral $\int \nu^2(t)dt$ is bounded and we have our result.
\end{proof}

\vskip.125in

%


\section{Proof of Theorem \ref{T222}}

We consider two cases; the first where a projection is rich in points and our distances come only from this rich plane. In the second case all planes are sparser, however for a fixed distance in a given projection we can ensure that we get lots of combinations with that distance from the other projections. We let $\alpha$ in $\left[\frac{1}{3},1\right]$ be a parameter to be optimised later. Suppose we have a projection with $\gtrsim N^{\alpha}$ points in it, then we have that $B_2^3(E)\gtrsim n^\alpha$.

If this is not the case then we have that all projections have $<n^\alpha$ points in. We consider the points in the first projection, as the other projections are of size at most $n^\alpha$ each, a point in the first projection can have at most $n^{2\alpha}$ predecessors so the average number is at least $n^{(1-2\alpha)}$. Similarly, there must be at least $n^{(1-2\alpha)}$ points in this projection with at least the average number, otherwise we would not have sufficient density to account for all the points in $E$. We call this collection of $n^{(1-2\alpha)}$ points with $n^{(1-2\alpha)}$ predecessors rich points. 

As these rich points lie in $\mathbb{R}^2$ they generate $\gtrapprox n^{(1-2\alpha)}$ distances. We fix one of these distances and fix two rich points which generate this distance, we observe that we now have two disjoint sets of size $n^{(1-2\alpha)}$ in different copies of $\mathbb{R}^4$. It is at this point our $s$-adaptability becomes important, as otherwise we would have no non-trivial bound on $B_{2,2}$ of these sets. However as these are $s$-adaptable we have that these sets generate $n^{(1-2\alpha)\eta_1}$ pairs for each fixed distance in the first coordinate. Giving us $\gtrapprox n^{(1-2\alpha)}\cdot n^{(1-2\alpha)\eta_1}$ distance triples in general. Combining these two bounds gives us
$$ B_2^3(E) \gtrapprox \min \{ n^{\alpha}, n^{(1-2\alpha)(1+\eta_1)} \},$$
which is optimised at $\alpha = \frac{1+\eta_1}{3+2\eta_1}$.

\vskip.125in

\section{Proof of Theorem \ref{T2qTimes}}

We follow closely the argument given of Theorem \ref{T222}. We set up our exponent $\alpha$ in $[\frac{1}{q},1]$ and consider the competing cases of one rich projection against many sparser projections. In the case where the size of the first projection has at least $n^\alpha$ points in we have that $B_2^q(E)$ has at least $n^\alpha$ triples. 

In the second case where all projections have fewer than $n^\alpha$ points, we use density counting to show that any projection has at least $n^{(1-(q-1)\alpha)}$ points with at least $n^{(1-(q-1)\alpha)}$ predecessors, we call such points rich. These rich points generate $\gtrapprox n^{(1-(q-1)\alpha)}$ distances in this projection, we fix one of these distances and look at its endpoints. The predecessors of these two rich points are two sets of size $n^{1-(q-1)\alpha}$ in two disjoint copies of $\mathbb{R}^{2(q-1)}$, which both inherit $s$-adaptability from $E$ as they are both subsets. Thus we can use our exponent for $B_2^{(q-1)}$ to obtain $n^{(1-(q-1)\alpha)\eta_{q-1}}$ distances $(q-1)$-tuples associate to our fixed distance.

As we can do this for all of the $n^{1-(q-1)\alpha}$ distances obtained in a projection we have that $B_2^q(E)$ has $\gtrapprox n^{1-(q-1)\alpha} \cdot n^{(1-(q-1)\alpha)\eta_{q-1}} = n^{(1-(q-1)\alpha)(1+\eta_{q-1})}$ distance $q$-tuples. Combining the estimates from both cases gives us that
$$ B_2^q(E) \gtrapprox \min \{ n^{\alpha}, n^{(1-(q-1)\alpha)(1+\eta_{q-1})} \},$$
which is optimised at $\alpha = \frac{1+\eta_{q-1}}{q+(q-1)\eta_{q-1}}$.

\vskip.125in

\section{Proof of Theorem \ref{TGenPartsDydPideon}}

\vskip.125in

Let $\pi_i$ be the projection of $\mathbb{R}^n$ onto $\mathbb{R}^{p_i}$, we define $R_i(x)=|\pi_i^{-1}(\{ \pi_i(x)\})|$ to be the $p_i$-richness of $x$ in $\mathbb{R}^d$. We can thus partition $E$ with respect to $R_i$ to gain $\frac{|E|}{\log(|E|)}$ points of $E$ with $R_i \sim r_i$ for each such point, call this new set $E_i$. Note that we preserve all points in $E$ with the same $p_i$-richness, thus we have that $|E| \approx|E_i| \sim |\pi(E_i)|r_i$. We now count distances tuples, first as we have $|\pi_i(E_i)|$ points in $\mathbb{R}^{p_i}$ these create at least $|\pi_i(E_i)|^{\gamma_i}$ distances. For each of these distances take a pair of endpoints and look at the points which project onto each, there are $\sim r_i$ of such points that lie in a subset of $\mathbb{R}^{d-p_i}$. Each of these point sets inherit the $s$-adaptability from $E$ and thus we can use the bound for $B_{p\setminus p_i}(A,B)$ to gain $r_i^{\eta_i}$ $(q-1)$-tuples of distances between these two sets. Combining our two distance estimates gives
$$ B_p(E) \gtrsim |\pi_i(E)|^{\gamma_i}r_i^{\eta_i} \gtrapprox |E|^{\min\{\gamma_i, \eta_i\}} $$

\vskip.125in

\section{Proof of Theorem \ref{TGenPart}}

\vskip.125in

We follow closely the argument given of Theorems \ref{T222} and \ref{T2qTimes}. We set up our exponent $\alpha$ in $[\frac{1}{q},1]$ and consider the competing cases of one rich projection against many sparser projections. In the case where the size of the first projection has at least $n^\alpha$ points but as we are in some $\mathbb{R}^{p_i}$ then $B_p(E)$ gains $n^{\gamma_i\alpha}$ q-tuples. It is clear that the worst of these occurs when $i=q$ and thus our first case realises at least $n^{\gamma_q\alpha}$ elements of  $B_p(E)$.

In the second case where all projections have fewer than $n^\alpha$ points, we use density counting to show that any projection has at least $n^{(1-(q-1)\alpha)}$ points with at least $n^{(1-(q-1)\alpha)}$ predecessors, we call such points rich. Rich points in the $i^{th}$ projection generate $\gtrapprox n^{\gamma_i(1-(q-1)\alpha)}$ distances in this projection, we fix one of these distances and look at its endpoints. The predecessors of these two rich points are two sets of size $n^{1-(q-1)\alpha}$ in two disjoint copies of $\mathbb{R}^{d-p_i}$, which both inherit $s$-adaptability from $E$ as they are both subsets. Thus we can use the bound for $s$-adaptable point sets $A$ and $B$ both of size $n^{{1-(q-1)\alpha}}$ to gain that these generate at least $n^{\eta_i(1-(q-1)\alpha)}$ $(q-1)$-tuples. Thus in this second case we have a total of $\gtrapprox n^{\gamma_i(1-(q-1)\alpha)} \cdot n^{(1-(q-1)\alpha)\eta_{i}} = n^{(1-(q-1)\alpha)(\gamma_i+\eta_{i})}$ distance $q$-tuples. Combining the estimates from both cases gives us that
$$ B_p(E) \gtrapprox \min \{ n^{\gamma_q\alpha}, n^{(1-(q-1)\alpha)(\gamma_i+\eta_{i})} \},$$
which is optimised at $\alpha = \frac{\gamma_i+\eta_{i}}{\gamma_q+(q-1)(\gamma_i+\eta_{i})}$. Using this value of $\alpha$ gives us the result

$$ B_p(E) \gtrapprox n^\tau \hspace{0.5cm} \text{where} \hspace{0.5cm} \tau = \gamma_q\left(\frac{\gamma_i +\eta_i}{\gamma_q+(q-1)(\gamma_i+\eta_i)}\right). $$

\vskip.125in

\section{Discussion of Optimal Exponents}

\vskip.125in

There is no reason to believe any of the bounds obtained above to be optimal, indeed the standard example of the integer cube produces gives the following bounds

\begin{example}Suppose that $E_n$ is the integer cube of size $n$ in $\mathbb{R}^d$ with $d\geq 4$ and $p$ an increasing integer partition of $d$ into $q$ integers, then
$$ |B_p(E_n)| \sim n^{2q/d}.$$
Indeed, since in each projection $\pi_i$ into $\mathbb{R}^{p_i}$ there are $n^{\frac{p_i}{d}}$ points coming from members of $E_n$, which form an integer lattice in $\mathbb{R}^{p_i}$. Thus these points create $(n^{\frac{p_i}{d}})^{\frac{2}{p_i}} = n^{\frac{2}{d}}$ distances. For any of these distances, the predecessors of some chosen endpoints will be integer grids in all but the fixed coordinates in the $p_i^{th}$ projection, thus we can recreate any distance in the other coordinates coming from the other projections. In total this gives
$$|B_p(E_n)| \sim \prod_{i=1}^q n^{\frac{2}{d}} \sim n^{2q/d}.$$
\end{example}

A further example of note is were we have a point set $E$ in a $p_q$-dimensional subset of $\mathbb{R}^d$. Suppose that this is done in a way such that the first $d-p_q$ coordinates are fixed, then we have that $B_p(E)$ is just the set of distances of $E$ in this $p_q$-dimensional subset. Thus we have that $B_p(E)\sim E^{\gamma_{p_q}}$. 

In the $q=2$ case initially studied, we have that $|B_{k,l}(E_n)| \sim n^{\frac{4}{k+l}}$, in particular $B_{2,2}(E_n)\sim n$ and $B_{2,3}(E_n)\sim n^{\frac{4}{5}}$. Recall that the bounds achieved in this paper where $1$ for the $(2,2)$-case, $\frac{3}{5}$ for the $(2,3)$-case. Note that the first of these is sharp (up to logarithms) while the second is short of the optimal $\frac{2}{3}$ bound obtained form all of our points lying in a three-dimensional subset of $\mathbb{R}^5$. However, the result we gained in the $(2,3)$-case was really $|B_{2,3}(E)| \gtrsim |E|^{\gamma_3}$ and thus is reliant on progress in the three-dimensional analogue of the Erd\H os-distance problem. An easier question would be to look at situations where your point set was truly five-dimensional, one could impose the condition that no more than $|E|^{\frac{1}{2}}$ of our points of $E$ lie in a subspace of dimension four. Under such conditions one could hope to brake the $|E|^\frac{2}{3}$ barrier and gain a result closer to the $|E|^{\frac{4}{5}}$ obtained by the grid.
 
In the $(k,l)$-case ($k\neq l$) we gain the exponent of $\gamma_l$ and when $k=l$ our exponent is $\frac{k^2+2k}{k^2+2k+1}\gamma_k$. In these cases we had the requirement of $s$-adaptability of our point sets in order to achieve these bounds, this came from the necessity of needing to find distances between two different point sets in higher dimensions. However the need for $s$-adaptability does not appear a necessary requirement for progress of the $B_{k,l}(E)$ bound and thus removing this requirement from the above theorems would be of great interest. In addition this would give hope to removing the additional case when $k=l$, as a different approach may remove the discrepancy between our distance bound on two sets and our distance bound on a single set.

For the partitions of two case the above examples suggests that our aim should be an exponent of one for all dimensions. However the exponent obtained in Theorem \ref{T2qTimes} is only a slight improvement of $\frac{1}{d}$, even using $s$-adaptable sets and the Guth-Katz solution of the Erd\H os-distance problem in the plane. Thus we believe that large improvements are possible for these bounds in particular, although the difficulty gaining `good' bounds here is unclear. As with the general case in the partitions of length $2$, the notion of $s$-adaptability does not seem crucial to the structure of the problem and thus should be able to be removed.


One can also ask for bounds on $B_p$ for a more diverse partitions $p$ hoping to better the exponent $\frac{\gamma_{p_q}}{q}$ obtained in (\ref{ETrivialGenp}). The method used in this paper seems to yield very little in this direction, in particular way one has to deal with distance tuples generated by different sets in high dimensions causes extreme inefficient bounds. Improving the bounds here appears also to be a very tricky proposition and would be of great interest.

\vskip.125in

\newpage

\bigskip

\end{document}